\patchcmd{\section}{\scshape}{\bfseries}{}{}
\renewcommand{\@secnumfont}{\bfseries}
\DeclareMathOperator{\Hom}{Hom}
\DeclareMathOperator{\Spec}{Spec}
\DeclareMathOperator{\Ker}{Ker}
\DeclareMathOperator{\Aut}{Aut}
\theoremstyle{plain}
\newtheorem{mydef}{\textbf{Definition}}[section]
\newtheorem{myeg}[mydef]{\textbf{Example}}
\newtheorem{mythm}[mydef]{\textbf{Theorem}}
\newtheorem*{nothm}{\textbf{Theorem}}
\newtheorem{rmk}[mydef]{\textbf{Remark}}
\newtheorem{lem}[mydef]{\textbf{Lemma}}
\newtheorem{pro}[mydef]{\textbf{Proposition}}
\newtheorem{que}[mydef]{\textbf{Question}}
\patchcmd{\abstract}{\scshape\abstractname}{\normalsize{\textbf{\abstractname}}}{}{}
\begin{document}

\title{Hyperstructures of affine algebraic group schemes}


\author{Jaiung Jun}
\address{Department of Mathematical Sciences, Binghamton University, Binghamton, NY 13902, USA}
\curraddr{}
\email{jjun@math.binghamton.edu}


\subjclass[2010]{14L15(primary), 20N20(secondary)}

\keywords{hyperfield, hyperring, hpyergroup, affine algebraic group scheme, $\mathbb{F}_1$-geometry}

\date{}

\dedicatory{}

\begin{abstract}
 \normalsize{\noindent We impose a rather unknown algebraic structure called a `hyperstructure' to the underlying space of an affine algebraic group scheme. This algebraic structure generalizes the classical group structure and is canonically defined by the structure of a Hopf algebra of global sections. This paper partially generalizes the result of A.Connes and C.Consani in \cite{con4}.}
\end{abstract}

\maketitle

\section{Introduction}
The idea of hyperstructures goes back to 1934 when F.Marty first suggested a notion of hypergroups in \cite{marty1935role} in such a way that a group multiplication is no longer single-valued but multi-valued. Shortly after, several aspects of hypergroups were investigated in relation to incidence geometry (see, \cite[\S 2.2]{corsini2003applications} for the historical development, also see \cite{con3} for the recent work of Connes and Consani in this direction).\\ 
In 1956, M.Krasner introduced a notion of hyperrings which generalizes commutative rings and use them in \cite{krasner1956approximation} for the approximation of valued fields. After Krasner's work, for decades, hyperstructures have been better known to computer scientists or applied mathematicians. This is due to uses of hyperstructures in connection with fuzzy logic (a form of multi-valued logic), automata, cryptography, coding theory via associations schemes, and hypergraphs (cf. \cite{corsini2003applications}, \cite{Dav2}, \cite{zieschang2006theory}). A notion of hypergroups has been also used in Harmonic analysis (cf. \cite{liltvinov}), however, algebraic aspects have not been much studied.\\ 
In recent years, the hyperstructure theory has been revitalized in connection with various fields. This is mainly done by Connes and Consani in connection to number theory, incidence geometry, and geometry in characteristic one (cf. \cite{con4}, \cite{con3}, \cite{con6}), O.Viro in connection to tropical geometry (cf. \cite{viro2}, \cite{viro}), and M.Marshall in connection to quadratic forms and real algebraic geometry (cf. \cite{mars2}, \cite{marshall2006real}). Furthermore, hyperstructures have certain relations with recently introduced algebraic objects such as supertropical algebras by Z.Izhakian and L.Rowen (cf. \cite{izhakian2014layered} , \cite{iza}), blueprints by O.Lorscheid (cf. \cite{oliver1}, \cite{lorscheid2015scheme}). Note that these are algebraic objects which aim to provide a firm algebraic foundation to tropical geometry. The author also applied an idea of hyperstructures to generalize the definition of valuations in \cite{jun2015valuations} and developed the basic notions of algebraic geometry over hyperrings in \cite{jaiungthesis}. 
\\

Let us now illustrate how a concept of hypergroups can be naturally implemented to affine algebraic group schemes. For an introduction to the basic notions of affine group schemes, we refer the readers to \cite{waterhouse2012introduction}.\\
Let $X=\Spec A$ be an affine algebraic group scheme over a field $k$. Then $A$ is a commutative Hopf algebra over $k$. Let $\Delta:A \longrightarrow A\otimes_k A$ be the coproduct and $m:A\otimes_k A \longrightarrow A$ be the multiplication. For a field extension $K$ of $k$, the set 
\[X(K)=\Hom(\Spec K, \Spec A)=\Hom(A,K)\] 
of $K$-rational points of $X$ has a group structure. More precisely, the group multiplication $*$ on the set $X(K)$ comes from the coproduct $\Delta$ of $A$ as follows: 
\begin{equation}\label{multiplicationofgroup}
f*g:=m \circ (f \otimes g)\circ \Delta, \quad f,g \in \Hom(A,K).
\end{equation}
However, in general, the underlying topological space $\Spec A$ itself does not carry any algebraic structure although $X$ is a group object in the category of affine schemes over $k$.\\ 
In the paper \cite{con4}, Connes and Consani adopted a notion of hyperstructures to recast the underlying topological space $\Spec A$ as a set of rational points of $X$ over the `Krasner's hyperfield' $\mathbf{K}$ (cf. Example \ref{krasner}). The novelty of their approach is that such a hyperstructure canonically arises from a coproduct of $A$. One of main ingredients of Connes and Consani is the following set bijection: 
\begin{equation}\label{bij}
\Hom(A,\mathbf{K})=\Spec A,
\end{equation}
where $\mathbf{K}$ is the Krasner's hyperfield and the homomorphisms are of hyperrings (by considering $A$ as a hyperring). In the view of \eqref{multiplicationofgroup} and \eqref{bij}, one is induced to ask if $\Spec A$ is a hypergroup. In \cite{con4}, Connes and Consani answered this question by generalizing the group multiplication of \eqref{multiplicationofgroup} to impose a hyperstructure to $\Spec A=\Hom(A,\mathbf{K})$. This algebraic (hyper) structure naturally emerges from a Hopf algebra structure of $A$. More precisely, Connes and Consani proved that if $A$ is a Hopf algebra over $\mathbb{Q}$ or $\mathbb{F}_p$ obtained from an affine line $\mathbb{G}_a$ or an algebraic torus $\mathbb{G}_m$, then $\Spec A$ is a (canonical) hypergroup (see, Theorem \ref{mainthemrem}).\\ 

In this paper, we first prove that Connes and Consani's definition is in fact an enrichment of the classical group structure as follows:
\begin{nothm}(cf. Proposition \ref{comparison})
Let $A$ be a Hopf algebra over a field $k$ with $|k|\geq 3$, $K$ be a field extension of $k$, and $X=\Spec A$. Then we have the following injection (of sets) 
\[
i: X(K)=\Hom(A,K) \longrightarrow X=\Spec A\]
such that $i(f*g)\subseteq i(f)*_hi(g)$,
where $*$ is the group multiplication of $X(K)$ and $*_h$ is the hyperoperation of $X$. 
\end{nothm}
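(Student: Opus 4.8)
The plan is to take for $i$ the obvious candidate: send a $k$-algebra homomorphism $f\colon A\to K$ to its kernel $\ker f$. Since $A/\ker f$ is a subring of the field $K$, hence a domain, $\ker f$ is a prime ideal, so this defines a map $i\colon \Hom(A,K)\to\Spec A$. Equivalently, and in the spirit of \eqref{bij}, $i$ is the composite $\Hom(A,K)\to\Hom(A,\mathbf{K})=\Spec A$, where the first arrow post-composes with the unique morphism of hyperrings $K\to\mathbf{K}$ (it sends $0$ to $0$ and all of $K^{\times}$ to $1$) and the identification is \eqref{bij}. I would begin by recording these two descriptions and checking that they agree.

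Injectivity of $i$ is the point I expect to be delicate, and the place where the hypothesis $|k|\geq 3$ must enter. Abstractly, $i^{-1}(\mathfrak p)$ is the set of $k$-embeddings $A/\mathfrak p\hookrightarrow K$, so injectivity is a rigidity statement about how the residue ring sits inside $K$; I would need to pin down exactly which coincidences the hypothesis excludes — the degeneracy at $k=\mathbb{F}_2$, where $k\to\mathbf{K}$ is already a set bijection — and, if necessary, whether $X(K)$ is to be read up to $k$-conjugacy so that the image consists of closed points. I anticipate most of the work of the proof lies here rather than in the inclusion.

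For the inclusion $i(f*g)\subseteq i(f)*_h i(g)$ I would unwind both products against the coproduct. Write $\mathfrak p_1=\ker f$, $\mathfrak p_2=\ker g$, $\mathfrak p=\ker(f*g)$, and for $a\in A$ fix a presentation $\Delta(a)=\sum_i a_i\otimes b_i$; by \eqref{multiplicationofgroup}, $(f*g)(a)=\sum_i f(a_i)g(b_i)$ in $K$, a value independent of the presentation since $f*g$ is a genuine homomorphism. By the definition of $*_h$, which mimics \eqref{multiplicationofgroup} at the level of $\mathbf{K}$-points, it is enough to show the character $\chi_{\mathfrak p}$ of $\mathfrak p$ satisfies $\chi_{\mathfrak p}(a)\in\sum_i\chi_{\mathfrak p_1}(a_i)\chi_{\mathfrak p_2}(b_i)$ in $\mathbf{K}$ for all $a$. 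Set $S=\{\,i: a_i\notin\mathfrak p_1\text{ and }b_i\notin\mathfrak p_2\,\}$; then the right-hand hypersum is $\{0\}$, $\{1\}$, or all of $\mathbf{K}$ according as $|S|$ is $0$, $1$, or at least $2$. When $|S|\geq 2$ there is nothing to prove; when $S=\varnothing$ every term $f(a_i)g(b_i)$ vanishes, so $(f*g)(a)=0$ and $\chi_{\mathfrak p}(a)=0$; when $S=\{i_0\}$ all terms but one vanish and $(f*g)(a)=f(a_{i_0})g(b_{i_0})\neq 0$ because $K$ is a field, so $\chi_{\mathfrak p}(a)=1$. In each case $\chi_{\mathfrak p}(a)$ lies in the hypersum, which gives the inclusion. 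One can also see this geometrically: $f*g$ factors as $A\xrightarrow{\Delta}A\otimes_k A\twoheadrightarrow (A/\mathfrak p_1)\otimes_k(A/\mathfrak p_2)=:B\to K$, so $\mathfrak p$ is the contraction of a prime of $B$ along $A\to B$ and hence lies in the image of $\Spec B\to\Spec A$, which (unwinding the definition of $*_h$) coincides with $\mathfrak p_1*_h\mathfrak p_2$.

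In short, the inclusion falls out formally once $*_h$ is written in terms of $\Delta$, and the genuine obstacle I foresee is establishing that $i$ is injective and locating precisely where $|k|\geq 3$ is needed.
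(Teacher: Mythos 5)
Your treatment of the inclusion $i(f*g)\subseteq i(f)*_h i(g)$ is correct and is, in substance, exactly the paper's proof: the paper also takes $i(\varphi)=\Ker(\varphi)$, rewrites it as post-composition with $\pi\colon K\to K/K^\times=\mathbf{K}$, and runs your trichotomy on the number of indices with $f(a_i)g(b_i)\neq 0$ (none, exactly one, at least two), using that a single nonzero term of a field cannot sum to zero. Two small remarks on that half. First, the hypothesis $|k|\geq 3$ is not, as you guessed, about injectivity: it is there solely so that $K/K^\times$ is the Krasner hyperfield and $\pi$ is a hyperring homomorphism (Theorem \ref{orbit}); for $|K|=2$ the quotient $K/K^\times$ is not $\mathbf{K}$. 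Second, your ``geometric'' aside asserting that the image of $\Spec\bigl((A/\mathfrak{p}_1)\otimes_k(A/\mathfrak{p}_2)\bigr)\to\Spec A$ \emph{coincides} with $\mathfrak{p}_1*_h\mathfrak{p}_2$ is not justified by unwinding the definition (the definition also forces $\varphi(a)=1$ when the hypersum is the singleton $\{1\}$, which not every prime over the contracted ideal need satisfy); only the containment $\Ker(f*g)\supseteq\Delta^{-1}(\mathfrak{p}_1\otimes A+A\otimes\mathfrak{p}_2)$ is immediate, and your direct computation already does the real work, so drop the aside or state only what you use.

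The genuine gap is injectivity, which you correctly flagged but did not resolve. The paper dismisses it with ``this map is clearly injective,'' and here your suspicion is better than the paper's confidence: as you note, the fibre of $i$ over $\mathfrak{p}$ is the set of $k$-embeddings $A/\mathfrak{p}\hookrightarrow K$, and this can have more than one element. Concretely, for $A=\mathbb{Q}[T]$ (the Hopf algebra of $\mathbb{G}_a$) and $K=\mathbb{Q}(\sqrt{2})$, the distinct $K$-points $T\mapsto\sqrt{2}$ and $T\mapsto-\sqrt{2}$ have the same kernel $(T^2-2)$, so $i$ is not injective. Injectivity does hold when $K=k$ (a $k$-algebra map $A\to k$ is recovered from its kernel because the induced map $A/\mathfrak{m}\to k$ is the unique $k$-algebra map), and more generally one should pass to $X(K)/\Aut(K/k)$, which is how the identifications in Theorem \ref{mainthemrem} are phrased. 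So the half of the statement you could not complete is not completed in the paper either, and as literally stated it is false for a general extension $K/k$; the fix is to restrict to $K=k$, quotient by $\Aut(K/k)$, or weaken ``injection'' to ``map.''
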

Then, we partially generalize their result to arbitrary affine algebraic group schemes as follows.
\begin{nothm}(cf. Theorem \ref{chevalleytheorem})
Any affine algebraic group scheme $X=\Spec A$ over a field $k$, such that $|k|\geq 3$, has a canonical hyperstructure $*$ induced from the coproduct on $A$ which satisfies the following conditions:
\begin{enumerate}
\item
$*$ is weakly-associative, i.e. $f*(g*h)\cap (f*g)*h \neq \emptyset$ for $\forall f,g,h \in X$. 
\item
$*$ is equipped with the identity element $e$, i.e. $f*e=e*f=f$ for $\forall f\in X$.
\item
For each $f \in X$, there exists a canonical element $\tilde{f} \in X$ such that $e \in (f*\tilde{f})\cap (\tilde{f} * f)$. 
\item
For $f,g,h \in X$, the following holds: $f \in g*h \Longleftrightarrow \tilde{f} \in \tilde{h}*\tilde{g}$.
\end{enumerate}
\end{nothm}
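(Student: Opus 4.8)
The plan is to build $*$ from the scheme-theoretic group law of $X=\Spec A$ and to deduce the four assertions, one by one, from the coherence axioms of the Hopf algebra $A$. Write $\mu=\Spec(\Delta)\colon X\times_kX\to X$, $\iota=\Spec(S)\colon X\to X$ and $e=\Spec(\varepsilon)\colon\Spec k\to X$ for the multiplication, inversion and unit morphisms, identify a point $f$ of $X$ with a prime of $A$ and, via \eqref{bij}, with the corresponding element of $\Hom(A,\mathbf{K})$, and set
\[
f*g:=\mu\bigl(\Spec(\kappa(f)\otimes_k\kappa(g))\bigr)\subseteq X,
\]
the $\mu$-image of the points of $X\times_kX$ lying over $f$ and $g$; equivalently, $h\in f*g$ iff $h=\phi\circ\Delta$ for some hyperring homomorphism $\phi\colon A\otimes_kA\to\mathbf{K}$ restricting to $f$ on $A\otimes 1$ and to $g$ on $1\otimes A$. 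This is manifestly canonical. Two facts will be used throughout: since $k$ is a field, $\kappa(f)\otimes_k\kappa(g)\neq 0$, so $f*g\neq\emptyset$ and $*$ is an honest hyperoperation; and since $A$ is commutative, $S^{2}=\operatorname{id}_A$, so $\iota$ is an involution. The hypothesis $|k|\geq 3$ enters only through the identification of the underlying set of $X$ with $\Hom(A,\mathbf{K})$ that lets us regard $*$ as an operation on $X$, as in Proposition \ref{comparison}.

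Assertions (2)--(4) are then essentially formal. For (2), take $e$ to be the point $\ker\varepsilon$; since $\kappa(e)=k$, there is a unique point of $X\times_kX$ over $(f,e)$, and the counit identities $(\operatorname{id}\otimes\varepsilon)\circ\Delta=\operatorname{id}=(\varepsilon\otimes\operatorname{id})\circ\Delta$, i.e.\ $\mu\circ(\operatorname{id}_X\times e)=\operatorname{id}_X=\mu\circ(e\times\operatorname{id}_X)$, send it to $f$; hence $f*e=\{f\}=e*f$. For (3), put $\tilde f:=\iota(f)$, which is manifestly canonical; the antipode axioms $m\circ(\operatorname{id}\otimes S)\circ\Delta=\eta\circ\varepsilon=m\circ(S\otimes\operatorname{id})\circ\Delta$ translate into $\mu\circ(\operatorname{id}_X\times\iota)\circ\delta=e\circ s=\mu\circ(\iota\times\operatorname{id}_X)\circ\delta$, with $\delta\colon X\to X\times_kX$ the diagonal and $s$ the structure morphism, and pushing the point $\Spec\kappa(f)\xrightarrow{\delta}X\times_kX$ through them yields points over $(f,\tilde f)$ and over $(\tilde f,f)$ with $\mu$-image $e$, which is exactly $e\in(f*\tilde f)\cap(\tilde f*f)$. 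For (4), use that the antipode is an anti-homomorphism, $\Delta\circ S=\tau_A\circ(S\otimes S)\circ\Delta$ with $\tau_A$ the flip of $A\otimes_kA$, equivalently $\iota\circ\mu=\mu\circ(\iota\times\iota)\circ\tau$ (the identity $(xy)^{-1}=y^{-1}x^{-1}$): if $f\in g*h$ is witnessed by a point $P$ of $X\times_kX$ over $(g,h)$ with $\mu(P)=f$, then $(\iota\times\iota)(\tau(P))$ lies over $(\tilde h,\tilde g)$ and has $\mu$-image $\iota(f)=\tilde f$, so $\tilde f\in\tilde h*\tilde g$; applying this to $\tilde f,\tilde g,\tilde h$ and using $\iota^{2}=\operatorname{id}$ gives the converse, hence the equivalence.

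The substance is (1), and that is where I expect the main obstacle. Coassociativity $(\Delta\otimes\operatorname{id})\circ\Delta=(\operatorname{id}\otimes\Delta)\circ\Delta$ provides a single morphism $\mu_{3}\colon X\times_kX\times_kX\to X$ equal to both $\mu\circ(\mu\times\operatorname{id})$ and $\mu\circ(\operatorname{id}\times\mu)$. Given $f,g,h\in X$, the scheme $T:=\Spec(\kappa(f)\otimes_k\kappa(g)\otimes_k\kappa(h))$ is nonempty because $k$ is a field; let $N$ be the image of $T$ in $X$ under $T\to X\times_kX\times_kX\xrightarrow{\mu_{3}}X$. I claim $N\subseteq(f*g)*h$ and $N\subseteq f*(g*h)$, so that, $N$ being nonempty, $\bigl((f*g)*h\bigr)\cap\bigl(f*(g*h)\bigr)\neq\emptyset$. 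The claim is a diagram chase with residue fields: a point $t\in T$ determines, via $\mu$ applied to its first two coordinates, a point $s\in f*g$, and --- using $\operatorname{pr}_1\circ(\mu\times\operatorname{id})=\mu\circ\operatorname{pr}_{12}$ and functoriality of $\Spec$ --- a point of $X\times_kX$ over $(s,h)$ whose $\mu$-image is the $\mu_{3}$-image of $t$; since the latter lies in $s*h\subseteq(f*g)*h$ and $t$ was arbitrary, $N\subseteq(f*g)*h$, and the symmetric argument with $\mu_{3}=\mu\circ(\operatorname{id}\times\mu)$ gives $N\subseteq f*(g*h)$. The delicate point --- and the reason one gets only the weak form for a general affine algebraic group scheme, whereas \cite{con4} obtains honest hypergroups for $\mathbb{G}_a$ and $\mathbb{G}_m$ --- is that this construction cannot be reversed: for an arbitrary $s\in f*g$ the residue field $\kappa(s)$ may be strictly smaller than that of the point of $\Spec(\kappa(f)\otimes_k\kappa(g))$ producing it, so an element of $(f*g)*h$ built from such an $s$ together with an \emph{independent} point over $(s,h)$ need not come from any single point of $T$. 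Thus weak associativity is the strongest conclusion available here, and carrying out the residue-field diagram chase that yields $N\subseteq(f*g)*h$ and $N\subseteq f*(g*h)$ is the main point of the proof.
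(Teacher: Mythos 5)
There is a genuine gap, and it is located before any of the four assertions: the hyperoperation you analyze is not the one the theorem is about. You define $f*g$ as the $\mu$-image of $\Spec(\kappa(f)\otimes_k\kappa(g))$, equivalently as the set of $\Delta^{-1}(Q)$ for $Q$ a prime of $A\otimes_kA$ lying over $(\Ker f,\Ker g)$. The theorem concerns the Connes--Consani operation \eqref{hyperoperation}, in which $\varphi\in f*g$ means $\varphi(x)\in\sum f(x_{(1)})g(x_{(2)})$ for \emph{every} presentation of $\Delta(x)$; unwinding this over $\mathbf{K}$, it is the set of all primes containing $\Delta^{-1}(\Ker f\otimes_kA+A\otimes_k\Ker g)$ and avoiding every element that admits a presentation with exactly one nonvanishing term. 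A primality argument does show that each of your points $\Delta^{-1}(Q)$ satisfies these conditions, so your set is \emph{contained} in the paper's $f*g$, but the reverse inclusion is false in general (the paper's set can contain primes not of the form $\Delta^{-1}(Q)$), and you never address it. For the two existence statements (1) and (3) this is repairable: once the inclusion is actually proved, your nonempty witnesses land inside the paper's $f*(g*h)\cap(f*g)*h$ and $f*\tilde f\cap\tilde f*f$. But (2) and (4) quantify over \emph{all} elements of a hyperproduct and do not transfer from a sub-operation to a larger one. Your proof of (2) only yields $\{f\}\subseteq e*f$; the hard half, $e*f\subseteq\{f\}$, is exactly what Lemma \ref{neutralelt} labors over, using the explicit presentation $\mathcal{O}_{GL_n}=k[X_{ij},1/d]$ and the ideal $\langle X_{ij}-\delta_{ij}\rangle\subseteq\Ker(e)$ to force any $g\in e*f$ to agree with $f$ first on $k[X_{ij}]$ and then on all of $A$ --- strong evidence that no purely formal counit argument can do it. Likewise your proof of (4) handles only those $f\in g*h$ of the form $\Delta^{-1}(Q)$, not an arbitrary element of the paper's $g*h$.

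For comparison, the paper's route is: prove all four properties for $GL_n$ by direct computation with \eqref{hyperoperation} (weak associativity via the primality of $H^{-1}(J)$, exactly as in Lemma \ref{nontrivialop}; the identity and inversion via the counit and antipode, plus the coordinate argument above), and then transport them to an arbitrary affine algebraic group scheme through a closed embedding into some $GL_n$ using Proposition \ref{inducedfromgeneral}, which shows the hyperoperation restricts strictly along Hopf quotients. Your weak-associativity diagram chase is the geometric shadow of the paper's $H^{-1}(J)$ construction, and your closing remarks about why only weak associativity survives are sound; but to make the proof establish the stated theorem you must either prove your operation coincides with \eqref{hyperoperation} (it does not in general) or redo (2) and (4) directly for \eqref{hyperoperation}, which is where the real content of the paper's argument lies.
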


\subsection*{Acknowledgment}
This paper is a part of the author's Ph.D. thesis \cite{jaiungthesis}. The author expresses his deep gratitude to his academic advisor Caterina Consani for countless conversations on this project. The author also thanks to Oliver Lorscheid for reading the first draft of this paper and provided helpful comments. In particular, Proposition \ref{comparison} comes from his observation. 

\section{Basic notions of hypergroups and hyperrings}
In this section, we provide the basic definitions of hypergroup and hyperring theory. For a complete introduction, we refer the readers to \cite{corsini2003applications}. 
\subsection{Hypergroups}
\begin{mydef}
Let $H$ be a nonempty set and $\mathcal{P}(H)$ be the set of nonempty subsets of $H$.
\begin{enumerate} 
\item
A hyperoperation on $H$ is a function, $* : H \times H \rightarrow \mathcal{P}(H)$. 
\item
For any nonempty subsets $A, B\subseteq H$, we define 
\[
A*B:=\bigcup_{a\in A, b\in B}(a*b).
\]  
\end{enumerate}
When $a*b$ contains a single element $c$, we write $a*b=c$ instead of $a*b=\{c\}$ for simplicity. 
\end{mydef}

\begin{mydef}\label{hypergroup}
A hypergroup $(H,*)$ is a nonempty set $H$ with a hyperoperation $*$ which satisfies the following properties:
\begin{enumerate}
\item
$(a*b)*c=a*(b*c)\quad  \forall a,b,c\in H.$
\item
$\exists !$ $e \in H$ such that $e*a=a*e=a$ for all $a \in H$.
\item
For each $a \in H$, $\exists !$ $b$ $(:=a^{-1})$ such that $e \in(a*b) \cap (b*a)$.
\end{enumerate}
We call the element $e$ of $H$ the identity element.
\end{mydef}

\begin{rmk} 
In fact, our Definition \ref{hypergroup} is stronger than the first definition given by Marty. In \cite{marty1935role}, a hypergroup is a nonempty set $H$ together with a hyperoperation $*$ which satisfies: $(a*b)*c=a*(b*c)\quad  \forall a,b,c\in H$ and $a*H=H*a=H$. One can easily observe that if $(H,*)$ is a hypergroup in the sense of Definition \ref{hypergroup}, then $(H,*)$ is a hypergroup in the sense of Marty. 
\end{rmk}

When a hypergroup $(H,*)$ is commutative (i.e., $a*b=b*a$), we call $(H,*)$ a canonical hypergroup. In this case, $(H,*)$ satisfies the following property (reversibility):
\[c \in a*b \Longrightarrow b \in c*a^{-1}\textrm{ and }a \in c*b^{-1}.\]
In case of a canonical hypergroup, we use $+$ notation for a hyperoperation.
\begin{myeg}\label{krasnergroup}
Let $\mathbf{K}:=\{0,1\}$. Then $(\mathbf{K},+)$ becomes a canonical hypergroup under the following commutative hyperoperation:
\[0+1=1=1+0,\quad  0+0=0, \quad  1+1=\{0,1\}.\]
\end{myeg}
\begin{myeg}\label{signgroup}
Let $\mathbf{S} = \{-1,0,1\}$. One may impose a commutative hyperoperation $+$ following the rule of signs:
\[0+0=0,\quad 1+0=1=1+1,\quad  (-1)+0=(-1)=(-1)+(-1), \quad 1+(-1)=\{ -1,0,1\}.\]
Then $(\mathbf{S},+)$ is a canonical hypergroup.
\end{myeg}

\begin{mydef}
Let $(H_1,*_1)$ and $(H_2,*_2)$ be hypergroups. A homomorphism $f$ from $H_1$ to $H_2$ is a function $f:H_1\longrightarrow H_2$ such that $f(e_1)=f(e_2)$ and
\[f(a*_1b)\subseteq f(a)*_2f(b), \quad \forall a,b \in H_1,\]
where $e_1$ and $e_2$ are identity elements of $H_1$ and $H_2$.
When $f(a*_1b)=f(a)*_2f(b)$ for all $a,b \in H_1$, $f$ is said to be a strict homomorphism. 
\end{mydef}

\subsection{Hyperrings}
In this subsection, we review the basic definitions of hyperring theory. We will restrict ourselves to Krasner hyperring. What it follows, by a hyperring we will always mean a Krasner hyperring.

\begin{mydef}
A (Krasner) hyperring $(R, + ,\cdot)$ is a nonempty set $R$ with a hyperoperation $+$ and a binary operation $\cdot$ which satisfy the following conditions:
 \begin{enumerate}
\item
$(R,+,0)$ is a canonical hypergroup, where $0$ is the identity element.
\item
$(R,\cdot,1)$ is a commutative monoid, where $1$ is the identity element.
\item
$ \forall a, b, c \in R$,  $ a\cdot(b+c)=a\cdot b+a\cdot c$,  $(a+b)\cdot c=a\cdot c+b\cdot c$.
\item
$\forall a \in R$,  $ a\cdot 0 = 0 = 0\cdot a$.
\item
$0 \neq 1$.
\end{enumerate}
When $(R\setminus \{ 0 \}, \cdot)$ is a group, we call $(R, +, \cdot)$ a hyperfield.
\end{mydef}
\begin{mydef}
Let $(R_1, +_1, \cdot_1)$,  $(R_2, +_2, \cdot_2)$ be hyperrings. A function $ f : R_1 \longrightarrow R_2$ is said to be a homomorphism of hyperrings if
\begin{enumerate}
\item
$f$ is a homomorphism of canonical hypergroups $(R_1,+_1)$ and $(R_2,+_2)$.
\item
$f$ is a homomorphism of monoids $(R_1,\cdot_1)$ and $(R_2,\cdot_2)$.
\item
$f$ is said to be strict if $f$ is strict as a homomorphism of canonical hypergroups.
\end{enumerate}
\end{mydef}

\begin{myeg}\label{krasner}
Let $\mathbf{K}:=\{0,1\}$. We impose a commutative monoid structure on $\mathbf{K}$ as follows:
\[1\cdot 1=1, \quad 0\cdot 1=0=1 \cdot 0,\] 
One can observe that this monoid structure is compatible with the canonical hypergroup structure given in Example \ref{krasnergroup}. In fact, $(\mathbf{K},+,\cdot)$ becomes a hyperfield called the Krasner's hyperfield.
\end{myeg}
\begin{myeg} 
Let $\mathbf{S} = \{-1,0,1\}$. One may impose a commutative monoid structure on $\mathbf{S}$ as follows:
\[1\cdot 1=1=(-1)\cdot(-1),\quad  (-1)\cdot 1=(-1),\quad  1\cdot 0=0=0\cdot (-1)=0\cdot 0.\]
Then, together with a canonical hypergroup structure given in Example \ref{signgroup}, $\mathbf{S}$ becomes a hyperfield called the hyperfield of signs.
\end{myeg}

We close this subsection by providing the following theorem of Connes and Consani which asserts that we have a rich class of hyperrings. 

\begin{mythm}(cf. \cite[Proposition $2.6$]{con3}) \label{orbit}
Let $A$ be a commutative ring and $A^\times$ be the group of (multiplicatively) invertible elements of $A$. Then, for any subgroup $G$ of $A^{\times}$, the set $A/G=\{aG \mid a \in A\}$ of cosets has a hyperring structure with the following operations:
\begin{enumerate}
\item
(multiplication): $aG\cdot bG := abG, \quad \forall aG, bG \in A/G$. 
\item
(hyperaddition): $aG+bG:= \{cG \mid c=ax+by \textrm{ for some }x,y \in G\},\quad \forall aG, bG \in A/G$.  
\end{enumerate}
A hyperring of this type is called a quotient hyperring.
\end{mythm}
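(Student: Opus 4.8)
The plan is a direct verification: show that the proposed multiplication and hyperaddition on $A/G$ are well defined, and then check the five axioms in the definition of a (Krasner) hyperring. The one idea used repeatedly is that, since $G$ is a \emph{subgroup} of $A^{\times}$, multiplication by a fixed element of $G$ permutes $G$; consequently sets of the form $\{\,(ax+by+\cdots)G \mid x,y,\ldots\in G\,\}$ are unchanged when the running coefficients are multiplied by fixed elements of $G$.

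I would first dispose of well-definedness and the ``multiplicative'' part. If $a'=ag$ and $b'=bh$ with $g,h\in G$, then $a'b'=(gh)ab$ with $gh\in G$, so $a'b'G=abG$; and since $(x,y)\mapsto(gx,hy)$ is a bijection of $G\times G$ we get $\{\,a'x+b'y\mid x,y\in G\,\}=\{\,a(gx)+b(hy)\mid x,y\in G\,\}=\{\,ax'+by'\mid x',y'\in G\,\}$, so $a'G+b'G=aG+bG$; thus both operations descend to $A/G$. Moreover $aG+bG$ always contains $(a\cdot 1+b\cdot 1)G=(a+b)G$, so it is a nonempty subset and $+$ is a genuine hyperoperation. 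Now $(A/G,\cdot,1G)$ is a commutative monoid because $\cdot$ is read off from the commutative monoid $(A,\cdot)$; distributivity holds since $aG\cdot(bG+cG)=\{\,(abx+acy)G\mid x,y\in G\,\}=abG+acG$; one has $aG\cdot 0G=0G$; and $0G\ne 1G$ because $0\in 0G$, $1\in 1G$, and $0\ne 1$ in $A$ (the theorem tacitly assumes $A\ne 0$). For the canonical hypergroup $(A/G,+,0G)$: commutativity is relabelling of summation variables; $0G$ is the identity since $0G+aG=\{\,(ay)G\mid y\in G\,\}=\{aG\}$ (and uniqueness of the identity is then automatic); and $0G\in aG+bG$ holds iff $ax=-by$ for some $x,y\in G$, i.e. iff $bG=(-a)G$, so $(-a)G$ is the unique inverse of $aG$, the containment $0G\in aG+(-a)G$ being witnessed by $a\cdot 1+(-a)\cdot 1=0$ (the reversibility property then follows automatically for canonical hypergroups, as recalled in the text).

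The only axiom requiring real care is associativity of $+$. I would prove that both $(aG+bG)+cG$ and $aG+(bG+cG)$ equal
\[
\{\,(ax+by+cz)\,G \mid x,y,z\in G\,\}.
\]
Indeed $(aG+bG)+cG=\bigcup_{x,y\in G}\big((ax+by)G+cG\big)=\{\,(axu+byu+cv)\,G \mid x,y,u,v\in G\,\}$, and the assignment $(x,y,u,v)\mapsto(xu,yu,v)$ sends $G^{4}$ onto $G^{3}$ (surjectivity via $u=1$, closure of $G$ under products for the rest); the computation for $aG+(bG+cG)$ is identical after grouping the other way, so the two iterated hypersums coincide. This is the crux: the substantive point is that the ``outer'' $G$-twist produced by the second hyperaddition can be absorbed into the ``inner'' summation variables, which is exactly where the group structure of $G$ is used. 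Every other verification is a one-line routine check, so I do not expect obstacles beyond this bookkeeping.
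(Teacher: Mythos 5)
Your proposal is a correct and complete direct verification; the one genuinely non\-routine point (absorbing the outer $G$-twist into the inner summation variables to get associativity of $+$) is handled properly. The paper itself gives no proof, deferring to Connes--Consani \cite[Proposition 2.6]{con3}, whose argument is the same routine check, so there is nothing further to compare.
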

In this way, we can see that the Krasner's hyperfield $\mathbf{K}$ is isomorphic to the quotient hyperring $k/k^\times$ for any field $k$ with $|k|\geq 3$.

\section{Hyperstructure of affine algebraic group schemes}
We first review how Connes and Consani generalize the group operation \eqref{multiplicationofgroup} to hyperstructures in \cite{con4}.

\begin{mydef}(\cite[Definition $6.1$]{con4}\label{hyperop})
Let $(A,\Delta)$ be a commutative ring with a coproduct $\Delta:A \longrightarrow A\otimes_{\mathbb{Z}}A$ and let $R$ be a hyperring. Let $X=\Hom(A,R)$ be the set of homomorphisms of hyperrings (by considering $A$ as a hyperring). For $\varphi_j \in X$, $j=1,2$, one defines
\begin{equation}\label{hyperoperation}
\varphi_1*_\Delta \varphi_2:=\{\varphi \in X \mid \varphi(x)\in\sum \varphi_1(x_{(1)})\varphi_2(x_{(2)}) ,\quad \forall \Delta(x)=\sum x_{(1)}\otimes x_{(2)}\}.
\end{equation}
Note that, in general, $\Delta(x)$ can have many presentations as an element of $A\otimes_{\mathbb{Z}} A$, and the condition in \eqref{hyperoperation} should hold for all presentations of $\Delta(x)$.
\end{mydef}

\begin{rmk}
One can easily notice that when $(A,\Delta)$ is cocommutative, the hyperoperation as in \eqref{hyperoperation} is commutative.
\end{rmk}

The following lemma of Connes and Consani will be used in sequel.
\begin{lem}(\cite[Lemma $6.4$]{con4}\label{lemcc})
Let $(A,\Delta)$ be a commutative ring with a coproduct $\Delta:A \longrightarrow A\otimes_{\mathbb{Z}}A$ and $J_j$ be ideals of $A$ for $j=1,2$. Then, the set 
\begin{equation}\label{J=J_1HHJ_2}
J:=J_1\otimes_{\mathbb{Z}} A + A\otimes_{\mathbb{Z}} J_2 
\end{equation}
is an ideal of $A\otimes_{\mathbb{Z}} A$ as well as the set 
\begin{equation}\label{inverofdelta}
J_1 *_\Delta J_2:=\{x \in A \mid\Delta(x) \in J\}
\end{equation}
is an ideal of $A$. Furthermore, for $\varphi \in \varphi_1*_{\Delta} \varphi_2$, we have
\begin{equation}\label{kerneldelta}
\Ker(\varphi_1)*_\Delta \Ker(\varphi_2) \subseteq \Ker(\varphi).
\end{equation}
\end{lem}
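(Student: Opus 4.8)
The statement comprises three assertions: that $J=J_1\otimes_{\mathbb{Z}}A+A\otimes_{\mathbb{Z}}J_2$ is an ideal of $A\otimes_{\mathbb{Z}}A$, that $J_1*_\Delta J_2=\Delta^{-1}(J)$ is an ideal of $A$, and the kernel inclusion \eqref{kerneldelta}. The plan is to dispatch the first two as formal facts about ideals and preimages, reserving the real work for the third, where the definition of the hyperoperation \eqref{hyperoperation} enters. For the first, I would read $J_1\otimes_{\mathbb{Z}}A$ as the subset of $A\otimes_{\mathbb{Z}}A$ of finite sums $\sum_i j_i\otimes a_i$ with $j_i\in J_1$, and check directly that it is an ideal: it is visibly an additive subgroup, and for any $b\otimes c$ one has $(b\otimes c)(j_i\otimes a_i)=(bj_i)\otimes(ca_i)$ with $bj_i\in J_1$ since $J_1$ is an ideal, so bilinearity gives closure under multiplication by arbitrary elements of $A\otimes_{\mathbb{Z}}A$. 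Symmetrically $A\otimes_{\mathbb{Z}}J_2$ is an ideal, and $J$, being their sum, is an ideal. For the second, I would use that the coproduct $\Delta$ is a ring homomorphism, so that $\Delta^{-1}(J)$ is the preimage of an ideal under a ring homomorphism and hence an ideal; concretely, if $\Delta(x)\in J$ then $\Delta(ax)=\Delta(a)\Delta(x)\in J$ for every $a\in A$, while additivity of $\Delta$ handles sums.

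The substance is the inclusion \eqref{kerneldelta}. Put $J_1=\Ker(\varphi_1)$ and $J_2=\Ker(\varphi_2)$, and take any $x\in J_1*_\Delta J_2$, so that $\Delta(x)\in J$. By definition of $J$ I can fix a presentation
\[
\Delta(x)=\sum_i a_i\otimes b_i+\sum_j c_j\otimes d_j,\qquad a_i\in\Ker(\varphi_1),\ d_j\in\Ker(\varphi_2),
\]
with $b_i,c_j\in A$ arbitrary. The key point is that membership $\varphi\in\varphi_1*_\Delta\varphi_2$ in \eqref{hyperoperation} is required to hold for \emph{every} presentation of $\Delta(x)$, so I may apply it to this particular one and obtain
\[
\varphi(x)\in\sum_i\varphi_1(a_i)\varphi_2(b_i)+\sum_j\varphi_1(c_j)\varphi_2(d_j).
\]
Since $\varphi_1(a_i)=0$ and $\varphi_2(d_j)=0$, every term on the right equals $0$ by the hyperring axiom $0\cdot r=0$. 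Finally, in the canonical hypergroup $(R,+)$ the identity satisfies $0+0=0$, so an iterated hypersum of zeros collapses to $\{0\}$; hence $\varphi(x)\in\{0\}$, i.e. $\varphi(x)=0$ and $x\in\Ker(\varphi)$.

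I expect the only genuine subtlety to be this last maneuver: one must resist testing the hyperoperation against the symbolic Sweedler presentation $\Delta(x)=\sum x_{(1)}\otimes x_{(2)}$ and instead exploit the freedom to choose a presentation adapted to the membership $\Delta(x)\in J$, after which the hyperring axioms force the vanishing. The remaining care is purely bookkeeping, namely verifying that a finite hypersum of single-element terms each equal to $0$ indeed equals $\{0\}$, which follows from the identity and associativity properties of the canonical hypergroup $(R,+)$.
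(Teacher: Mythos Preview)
Your argument is correct. The three assertions are handled cleanly: the first two are indeed routine facts about ideals and preimages under ring homomorphisms, and your treatment of the inclusion \eqref{kerneldelta} correctly exploits the clause in Definition~\ref{hyperop} that the membership condition in \eqref{hyperoperation} must hold for \emph{every} presentation of $\Delta(x)$, allowing you to pick one adapted to $J$.

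As for comparison with the paper: this lemma is not proved in the paper at all. It is quoted from \cite[Lemma~6.4]{con4} and used as a black box in the subsequent arguments (e.g., in the remark following Lemma~\ref{nonempty}). So there is no proof in the present paper to compare your proposal against; your write-up supplies what the paper omits by citation, and does so correctly.
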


\noindent In \cite{con4}, the authors proved that for a commutative ring $A$ and for the Krasner's hyperfield $\mathbf{K}$, one has the following identification (of sets):
\begin{equation}\label{setsame}
\Hom(A,\mathbf{K})=\Spec A, \quad \varphi \mapsto \Ker(\varphi).
\end{equation}
Thus, the underlying topological space $\Spec A$ can be considered as the set of `$\mathbf{K}$-rational points' of the affine scheme $X=\Spec A$. The following theorem is the main motivation of the paper.

\begin{mythm}(\cite[Theorems $7.1$ and $7.13$]{con4}\label{mainthemrem})
Let $\mathbf{K}$ be the Krasner's hyperfield.
\begin{enumerate}
\item
Let $\delta$ be the generic point of $\Spec\mathbb{Q}[T]$. Then, $\Spec \mathbb{Q}[T] \backslash \{\delta\}$ and $ \Spec \mathbb{Q}[T,\frac{1}{T}] \backslash \{\delta\}$ are hypergroups via \eqref{hyperoperation} and \eqref{setsame}. Moreover, we have 
\[\Spec \mathbb{Q}[T] \backslash \{\delta\} \simeq \bar{\mathbb{Q}}/\Aut(\bar{\mathbb{Q}}), \quad \Spec \mathbb{Q}[T,\frac{1}{T}] \backslash \{\delta\} \simeq \bar{\mathbb{Q}}^{\times}/\Aut(\bar{\mathbb{Q}}).\]
\item
Let $\Omega$ be an algebraic closure of the field of fractions, $\mathbb{F}_p(T)$. Then, $\Spec \mathbb{F}_p[T]$ and $ \Spec \mathbb{F}_p[T,\frac{1}{T}]$ are hypergroups via \eqref{hyperoperation} and \eqref{setsame}. We also have 
\[\Spec \mathbb{F}_p[T]\simeq \Omega /\Aut(\Omega), \quad \Spec \mathbb{F}_p[T,\frac{1}{T}]\simeq \Omega^{\times} /\Aut(\Omega).\]
\end{enumerate}
\end{mythm}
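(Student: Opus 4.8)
The plan is to identify, through \eqref{setsame}, each of the four spaces with a set of orbits of a field under a group of automorphisms, to show that the hyperoperation \eqref{hyperoperation} attached to the explicit coproduct is the associated orbit-sum, and to deduce the hypergroup axioms from this description. For $\mathbb{G}_a$ one has $A=k[T]$ with $\Delta(T)=T\otimes 1+1\otimes T$, and for $\mathbb{G}_m$ one has $A=k[T,\frac{1}{T}]$ with $\Delta(T)=T\otimes T$. Under $\varphi\mapsto\Ker(\varphi)=P$ a point $P$ acquires a residue field $\kappa(P)$, and an embedding $\kappa(P)\hookrightarrow\Omega_k$ (with $\Omega_k=\bar{\mathbb{Q}}$ for $k=\mathbb{Q}$ and $\Omega_k=\Omega$ for $k=\mathbb{F}_p$) sends the class of $T$ to an element whose $\Aut(\Omega_k)$-orbit is independent of the embedding; this defines the bijections asserted in the statement. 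Closed points go to orbits of algebraic elements and the generic point $\delta$ goes to the orbit of a transcendental element. Since $\bar{\mathbb{Q}}$ contains nothing transcendental over $\mathbb{Q}$, there is no orbit available for $\delta$, which forces its removal in (1); over $\mathbb{F}_p$ the field $\Omega$ has a single orbit of transcendental elements, which accommodates $\delta$, so it is retained in (2). Inverting $T$ discards the orbit of $0$, yielding $\bar{\mathbb{Q}}^\times/\Aut(\bar{\mathbb{Q}})$ and $\Omega^\times/\Aut(\Omega)$.

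The conceptual backbone is the remark that if $M$ is an abelian group and a group $\Gamma$ acts on $M$ by automorphisms, then $M/\Gamma$ is a canonical hypergroup under $[a]*[b]=\{[\gamma(a)+b]\mid\gamma\in\Gamma\}$ (written multiplicatively for $\mathbb{G}_m$), with identity $[0]$ and $[a]^{-1}=[-a]$; this is a short direct verification in the spirit of Theorem \ref{orbit}, the only difference being that $\Gamma$ now acts by ring automorphisms rather than by multiplication by units. I would apply it to $(\bar{\mathbb{Q}},+)$, $(\bar{\mathbb{Q}}^\times,\cdot)$, $(\Omega,+)$ and $(\Omega^\times,\cdot)$ under $\Aut$. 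Thus, once the bijections above are shown to carry this orbit-sum (resp. orbit-product) to $*_\Delta$, the axioms of Definition \ref{hypergroup} follow automatically, the identity being realized by the counit (the points $(T)$ and $(T-1)$) and the inverse by the antipode ($T\mapsto -T$ and $T\mapsto T^{-1}$).

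The matching on pairs of closed points is the clean case, governed by Lemma \ref{lemcc}. For closed $P_\alpha,P_\beta$ the ideal \eqref{inverofdelta} is the kernel of $A\xrightarrow{\Delta}A\otimes_k A\to\kappa(P_\alpha)\otimes_k\kappa(P_\beta)$, under which $T\mapsto\alpha\otimes 1+1\otimes\beta$ (resp. $T\mapsto\alpha\otimes\beta$); splitting the finite $k$-algebra $\kappa(P_\alpha)\otimes_k\kappa(P_\beta)$ into fields identifies its minimal primes with the points $P_{\sigma(\alpha)+\tau(\beta)}$ (resp. $P_{\sigma(\alpha)\tau(\beta)}$), where $\sigma,\tau$ run over the conjugations. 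This ideal is nonzero, so \eqref{kerneldelta} forces $\Ker(\varphi)$ to lie over it, giving $[\alpha]*_\Delta[\beta]\subseteq\{[\sigma(\alpha)+\tau(\beta)]\}$, while the reverse inclusion is Proposition \ref{comparison} applied with $K=\Omega_k$, each $\sigma(\alpha)+\tau(\beta)$ being the image of a group-theoretic product of $\Omega_k$-points. Hence on closed points $*_\Delta$ is exactly the orbit-sum (resp. product), which already settles part (1).

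The step I expect to be the main obstacle is the generic point in part (2). When an argument is $\delta$ the ideal \eqref{inverofdelta} degenerates: for $\mathbb{G}_a$ the composite $A\to A\otimes_k\kappa(P_\beta)$ is the injective shift $T\mapsto T+\beta$, so $\Ker(\varphi_\delta)*_\Delta\Ker(\varphi_\beta)=(0)$ and \eqref{kerneldelta} yields nothing. One must therefore compute $\delta*_\Delta[\beta]$ directly from \eqref{hyperoperation}: a closed point $\gamma$ fails to lie in $\delta*_\Delta[\beta]$ exactly when some $x\in P_\gamma$ admits a presentation of $\Delta(x)$ that reduces, modulo $A\otimes P_\beta$, to a single nonzero simple tensor, for then the coarse relation $1+1=\{0,1\}$ in $\mathbf{K}$ collapses the hyperaddition to $\{1\}\not\ni\varphi_\gamma(x)$. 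The crux is to produce such an $x$ in every closed $P_\gamma$, and this is possible over $\mathbb{F}_p$ precisely because of characteristic-$p$ phenomena: for $\mathbb{G}_a$ the Frobenius-linearized (additive) polynomials supply, for each $\gamma$, an element of $P_\gamma$ of the form $G(\ell(T)-\ell(\beta))$ with $\ell$ a suitable additive polynomial and $\ell(\beta)\in\mathbb{F}_p$, which is of the required single-tensor type, while for $\mathbb{G}_m$ the finiteness of multiplicative orders in $\overline{\mathbb{F}_p}^\times$ lets one use monomial differences $T^a-T^b$. This forces $\delta*_\Delta[\beta]=\{\delta\}$ and, by the analogous argument, $\delta*_\Delta\delta=\Spec A$, in agreement with the orbit-sum; the very same construction has no analogue in characteristic zero, which is the structural reason $\delta$ must be deleted over $\mathbb{Q}$. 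It remains only to verify the forward inclusion over $\mathbb{F}_2$, where Proposition \ref{comparison} is inapplicable because $|k|<3$ although \eqref{setsame} still holds, by a direct check against the explicit coproduct. Once $\delta$ is under control, the orbit-sum description holds throughout and the bijections of the first paragraph become the asserted hypergroup isomorphisms.
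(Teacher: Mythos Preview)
The paper does not prove this statement at all: Theorem~\ref{mainthemrem} is quoted verbatim from Connes--Consani \cite[Theorems~7.1 and~7.13]{con4} as motivation, with no proof given here. So there is no ``paper's own proof'' to compare your attempt against; what you have written is effectively a sketch of how the original argument in \cite{con4} goes.

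That said, your outline is broadly faithful to the Connes--Consani approach: identify $\Spec A$ with Galois orbits via residue fields, transport the orbit-sum hyperoperation (in the spirit of Theorem~\ref{orbit}) across that bijection, and then verify that it matches \eqref{hyperoperation}. Your treatment of pairs of closed points via Lemma~\ref{lemcc} and the splitting of $\kappa(P_\alpha)\otimes_k\kappa(P_\beta)$ is the right mechanism, and your diagnosis of the generic point as the delicate case---handled in characteristic $p$ by additive polynomials for $\mathbb{G}_a$ and torsion in $\overline{\mathbb{F}_p}^{\times}$ for $\mathbb{G}_m$, with no analogue over $\mathbb{Q}$---is exactly the phenomenon Connes and Consani exploit. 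Two places deserve more care if you intend this as a proof rather than a sketch: (i) invoking Proposition~\ref{comparison} for the reverse inclusion is convenient but circular in spirit, since that proposition is a later observation in this paper and in any case excludes $k=\mathbb{F}_2$, which part~(2) must cover; a direct computation with the explicit coproduct is cleaner and uniform; (ii) the assertions $\delta*_\Delta[\beta]=\{\delta\}$ and $\delta*_\Delta\delta=\Spec A$ need the full two-sided argument (exclusion \emph{and} inclusion), and your ``by the analogous argument'' for the latter hides a genuinely different computation---for $\mathbb{G}_a$ one must check that no nonconstant $x$ has $\Delta(x)$ representable as a single nonzero pure tensor, which is an elementary but separate fact.
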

\begin{rmk}
Note that the hypergroup structure of $\bar{\mathbb{Q}}/\Aut(\bar{\mathbb{Q}})$, $\bar{\mathbb{Q}}^{\times}/\Aut(\bar{\mathbb{Q}})$, $\Omega /\Aut(\Omega)$, and $\Omega^{\times} /\Aut(\Omega)$ are given similar to Theorem \ref{orbit}. For details, see \cite{con4}.
\end{rmk}
In other words, the Connes and Consani defined the hyperoperation $*$ on $X=\Spec A$ when $A$ is a commutative ring with a coproduct and showed that in some cases, $(X,*)$ is a hypergroup (cf. Theorem \ref{mainthemrem}). 
In this paper, we show that $(X=\Spec A,*)$ is an algebraic object which is more general than a hypergroup. In what follows, by $\mathbf{K}$ we always mean the Krasner's hyperfield (cf. Example \ref{krasner}). Also note that in general, we can not expect the hyperoperation $*$ on $X=\Spec A$ to be commutative unless $A$ is cocommutative.
 \begin{rmk}\label{fiber}
Suppose that $A$ is a commutative ring with a coproduct $\Delta$. For $f,g \in \Hom(A,\mathbf{K})$, unless $f|_\mathbb{Z}=g|_\mathbb{Z}$, $f*g$ is an empty set (\cite[Lemma $6.2$]{con4}). In other words, the hyperoperation $*$ is non-trivial only within the fibers of the following restriction map
\[ \Phi: \Hom(A,\mathbf{K}) \rightarrow \Hom(\mathbb{Z},\mathbf{K})=\Spec \mathbb{Z},\quad  f\mapsto f|_\mathbb{Z}. \]
As explained in \cite{con4}, one can easily check that for the generic point $\delta \in \Spec \mathbb{Z}$, we have the identification $\Phi^{-1}(\delta)=\Hom(A\otimes_{\mathbb{Z}}\mathbb{Q},\mathbf{K})$ which is compatible with the hyperoperations. Also, for $\wp=(p) \in \Spec \mathbb{Z}$, we have the identification $\Phi^{-1}(\wp)=\Hom(A\otimes_{\mathbb{Z}}\mathbb{F}_p,\mathbf{K})$ which is also compatible with the hyperoperations.
\end{rmk}
In the view of Remark \ref{fiber}, in the following, we will focus on the case of a commutative Hopf algebra over a field $k$. Also, in the sequel, all Hopf algebras will be assumed to be commutative.\\
We begin with a lemma showing that if we work over a field, our hyperoperation is always non-trivial. 

\begin{lem}\label{nontrivialop}
Let $A$ be a Hopf algebra over a field $k$ with a coproduct $\Delta: A \rightarrow A \otimes_{k} A$. If $f,g \in \Hom(A,\mathbf{K})$, then the set
\[ P:=\Delta^{-1}(\Ker(f) \otimes_k A + A \otimes_k \Ker(g)) \]
is a prime ideal of $A$.
\end{lem}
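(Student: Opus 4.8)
The plan is to exploit the identification $\Hom(A,\mathbf{K})=\Spec A$ from \eqref{setsame}, under which a hyperring homomorphism $f\colon A\to\mathbf{K}$ corresponds to the prime ideal $\Ker(f)$. Setting $J_1:=\Ker(f)$ and $J_2:=\Ker(g)$, these are prime ideals of $A$, and the set $P$ in the statement is exactly $J_1*_\Delta J_2$ in the notation of Lemma \ref{lemcc}, applied with the base ring $k$ in place of $\mathbb{Z}$ (Lemma \ref{lemcc} goes through verbatim over any commutative base). Thus Lemma \ref{lemcc} already gives that $P$ is an \emph{ideal} of $A$; the real content to be proved here is that $P$ is \emph{prime}.

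First I would reduce primality of $P$ to a statement about the quotient ring $A/P$. Consider the composite
\[
A \xrightarrow{\ \Delta\ } A\otimes_k A \xrightarrow{\ \pi\ } (A/J_1)\otimes_k (A/J_2),
\]
where $\pi$ is the canonical surjection. By the right-exactness of the tensor product over the field $k$, the kernel of $\pi$ is precisely $J_1\otimes_k A + A\otimes_k J_2$, so the kernel of $\pi\circ\Delta$ is exactly $P=\Delta^{-1}(J_1\otimes_k A + A\otimes_k J_2)$. Hence $A/P$ embeds as a subring of $(A/J_1)\otimes_k(A/J_2)$. Now $A/J_1$ and $A/J_2$ are integral domains containing the field $k$; moreover, since $A$ is a Hopf algebra over a field and the $J_i$ arise as kernels of homomorphisms to $\mathbf{K}$, one checks that each $A/J_i$ is in fact a \emph{field} — this is where one uses that a hyperring homomorphism $A\to\mathbf{K}$ forces $\Ker(f)$ to be such that $A/\Ker(f)$ has no zero divisors and, combined with the affine \emph{algebraic} group scheme hypothesis (so $A$ is finitely generated, making $A/J_i$ a finitely generated domain that is a field by Zariski's lemma once we know it is zero-dimensional), $A/J_i$ is a field extension of $k$. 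Therefore $(A/J_1)\otimes_k(A/J_2)$ is a tensor product of two field extensions of $k$.

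It remains to see that a tensor product $K_1\otimes_k K_2$ of two field extensions of $k$, while not in general a domain, \emph{becomes} a domain after we restrict to the image of $\Delta$ — or, more robustly, that $A/P$ is a domain directly. The cleanest route: the image of the composite $\pi\circ\Delta$ is a subring of $K_1\otimes_k K_2$, and I claim it is contained in a localization that is a domain. Concretely, use the counit and the antipode of the Hopf algebra to produce, from $\Delta$, a ``translation'' isomorphism of $A\otimes_k A$ (the standard map $(a\otimes b)\mapsto \sum a b_{(1)}\otimes b_{(2)}$, invertible via the antipode) that carries $J_1\otimes_k A + A\otimes_k J_2$ to an ideal of the form $J'\otimes_k A + A\otimes_k J_2$ for a suitable prime $J'$; since $(A/J')\otimes_k K_2$ with one factor a field is of the form $(A/J')$ localized/base-changed along a field extension and $A/J'$ a domain, one concludes it is a domain, hence so is its subring $A/P$. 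The main obstacle I anticipate is precisely this last point — tensor products of fields over a non-perfect or non-separably-closed base need not be reduced, let alone domains — so the argument must genuinely use the Hopf-algebra structure (coassociativity together with the antipode) to twist the ``generic'' ideal $J_1\otimes_k A + A\otimes_k J_2$ into one whose quotient is visibly a domain, rather than relying on any naive integrality of $K_1\otimes_k K_2$.
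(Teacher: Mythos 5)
Your reduction in the first half is the right move: since $k$ is a field, $\Ker(f)\otimes_k A+A\otimes_k\Ker(g)$ is exactly the kernel of the surjection $A\otimes_kA\twoheadrightarrow (A/J_1)\otimes_k(A/J_2)$, so $P=\Ker(\pi\circ\Delta)$ and $A/P$ is identified with the image of $\pi\circ\Delta$ inside $(A/J_1)\otimes_k(A/J_2)$. (The paper instead argues directly with decompositions of $\Delta(\alpha)$, $\Delta(\beta)$ and the multiplicativity $\Delta(\alpha\beta)=\Delta(\alpha)\Delta(\beta)$.) But the second half of your argument has two genuine gaps. First, under the identification \eqref{setsame} the kernels $J_i$ range over \emph{all} primes of $A$, not just maximal ideals (the generic point of $\mathbb{G}_a$ gives $A/J_1=k[T]$), and the lemma does not assume $A$ finitely generated, so Zariski's lemma is unavailable and $A/J_i$ need not be a field. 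Second, and decisively, the concluding step is asserted rather than proved, and the assertion it leans on is false: $(A/J')\otimes_kK_2$ with $A/J'$ a domain and $K_2$ a field extension of $k$ need not be a domain even in characteristic zero, e.g.\ $\mathbb{Q}(\sqrt2)\otimes_{\mathbb{Q}}\mathbb{Q}(\sqrt2)\cong\mathbb{Q}(\sqrt2)\times\mathbb{Q}(\sqrt2)$; nor is it clear that your translation map carries $J_1\otimes_kA+A\otimes_kJ_2$ to an ideal of the form $J'\otimes_kA+A\otimes_kJ_2$.

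In fact your reformulation makes the statement easy to test, and it fails. Take $A=\mathbb{Q}[T,1/T]$ (so $X=\mathbb{G}_m$ and $\Delta T=T\otimes T$) and let $f=g$ correspond to the prime $(T^2-2)$. Then $\pi\circ\Delta$ is the map $A\to\mathbb{Q}(\sqrt2)\otimes_{\mathbb{Q}}\mathbb{Q}(\sqrt2)$ sending $T\mapsto\sqrt2\otimes\sqrt2$; its kernel contains $(T-2)(T+2)=T^2-4$ because $(\sqrt2\otimes\sqrt2)^2=4(1\otimes1)$, but contains neither $T-2$ nor $T+2$, since $\sqrt2\otimes\sqrt2-(\pm2)(1\otimes1)\neq 0$ in the basis $\{1\otimes1,1\otimes\sqrt2,\sqrt2\otimes1,\sqrt2\otimes\sqrt2\}$. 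Hence $P=\bigl((T-2)(T+2)\bigr)$ is not prime. This also locates the weak point of the paper's own argument: from $\alpha\beta\in P$ one may only conclude that $0\in\sum_{i,j}f(a_ic_j)g(b_id_j)$, not that this hypersum equals $0$ as claimed in \eqref{ab}; in the example above the hypersum is $\{0,1\}$, which is simultaneously ``$1$ or $\{0,1\}$,'' so no contradiction arises. Consequently no repair of your sketch (or of the paper's computation) can establish the lemma in the stated generality; at best one can hope for primality of $P$ under additional hypotheses on $f$ and $g$.
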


\begin{proof}
Trivially, $P$ is an ideal by being an inverse image of an ideal. Hence, all we have to show is that $P$ is prime. Suppose that $\alpha \beta \in P$. Then, by definition, $\Delta(\alpha \beta) \in \Ker(f) \otimes_k A + A \otimes_k \Ker(g)$. This implies that for any decomposition $\Delta(\alpha\beta)=\sum \gamma_{(1)} \otimes_k \gamma_{(2)}$, we have $\sum f(\gamma_{(1)})g(\gamma_{(2)})=0$. Assume that $\alpha \not \in P$. Then, there is a decomposition $\Delta\alpha=\sum a_i \otimes_k b_i$ such that $\sum f(a_i)g(b_i) =1$ or $\{0,1\}$. If $\beta \not \in P$, then we also have a decomposition $\Delta\beta=\sum c_j \otimes_k d_j$ such that $\sum f(c_j)g(d_j) =1$ or $\{0,1\}$. For these two specific decompositions, we have
\begin{equation}\label{ab_comute}
 \Delta(\alpha\beta)=\Delta(\alpha)\Delta(\beta)=(\sum a_i \otimes_k b_i)(\sum c_j \otimes_k d_j) = \sum_{i,j} a_ic_j \otimes_k b_id_j.
\end{equation}
Since $\alpha\beta \in P$, we should have
\begin{multline}\label{ab}
\sum_{i,j}f(a_ic_j)g(b_id_j)=\sum_{i,j}f(a_i)f(c_j)g(b_i)g(d_j)\\
=\sum_{i,j}f(a_i)g(b_i)f(c_j)g(d_j)=\sum_i[(f(a_i)g(b_i)) \sum_j f(c_j)g(d_j)]=0.
\end{multline}
However, since we know that $\sum_i f(a_i)g(b_i) =1 \textrm{ or } \{0,1\}$ and $\sum_j f(c_j)g(d_j) =1 \textrm{ or } \{0,1\}$, we only can have
\[ \sum_i[(f(a_i)g(b_i)) \sum_j f(c_j)g(d_j)]=1 \textrm{ or } \{0,1\}. \]
This contradicts to \eqref{ab}. Hence, either $\alpha$ or $\beta$ should be in $P$.
\end{proof}

\begin{lem}\label{nonempty}
Let $A$ be a Hopf algebra over a field $k$. If $f,g \in \Hom(A,\mathbf{K})$, then the set $f*g$ is not empty.
\end{lem}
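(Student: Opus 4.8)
The plan is to exhibit an explicit element of $f*g$, namely the homomorphism of hyperrings $A \to \mathbf{K}$ whose kernel is the prime ideal $P := \Delta^{-1}(\Ker(f)\otimes_k A + A\otimes_k \Ker(g))$ produced by Lemma \ref{nontrivialop}. Via the identification \eqref{setsame}, a prime ideal $\mathfrak{p}$ of $A$ corresponds to the hyperring homomorphism $\varphi_{\mathfrak{p}}\colon A \to \mathbf{K}$ sending $x\mapsto 0$ if $x\in\mathfrak{p}$ and $x\mapsto 1$ otherwise; so let $h := \varphi_P$. I would then verify directly that $h$ satisfies the membership condition defining $f *_\Delta g$ in \eqref{hyperoperation}: for every $x\in A$ and every presentation $\Delta(x)=\sum x_{(1)}\otimes x_{(2)}$, one needs $h(x)\in \sum f(x_{(1)})g(x_{(2)})$.

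The verification splits into two cases according to whether $x\in P$ or not. If $x\in P$, then by definition of $P$ we have $\Delta(x)\in \Ker(f)\otimes_k A + A\otimes_k \Ker(g)$, which (exactly as in the proof of Lemma \ref{nontrivialop}) forces $\sum f(x_{(1)})g(x_{(2)}) = 0$ for every presentation; since $h(x)=0$ in this case, the condition $0\in\{0\}$ holds. If $x\notin P$, then $h(x)=1$, and I must check that $1 \in \sum f(x_{(1)})g(x_{(2)})$ for every presentation, i.e.\ that this hypersum is never $\{0\}$ alone. Here I would argue that $x\notin P$ means $\Delta(x)\notin \Ker(f)\otimes_k A + A\otimes_k \Ker(g)$, so there is \emph{some} presentation with $\sum f(a_i)g(b_i)$ equal to $1$ or $\{0,1\}$; the point is to propagate this to \emph{all} presentations. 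Two presentations of $\Delta(x)$ differ by the relations in $A\otimes_k A$, and one checks that applying $f\otimes g$ (landing in $\mathbf{K}$) to any relation in the tensor product is consistent, so the hypersum $\sum f(x_{(1)})g(x_{(2)})$, as a subset of $\mathbf{K}$, contains $1$ independently of the presentation — equivalently, $1\in\sum f(x_{(1)})g(x_{(2)})$ for one presentation iff $\Delta(x)\notin\Ker(f)\otimes_k A + A\otimes_k\Ker(g)$, which is presentation-independent.

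The main obstacle I anticipate is precisely this presentation-independence in the case $x\notin P$: one must rule out that some clever presentation of $\Delta(x)$ yields hypersum exactly $\{0\}$ while another yields $\{0,1\}$. The clean way around it is to observe that the statement "$1\in\sum f(x_{(1)})g(x_{(2)})$ for some presentation'' is equivalent to "$\Delta(x)\notin \Ker(f)\otimes_k A + A\otimes_k\Ker(g)$'', and the latter is a property of the element $\Delta(x)\in A\otimes_k A$, not of any chosen presentation; combined with the case analysis above this shows that for every presentation the hypersum contains $h(x)$. Finally, I would note that $h$ is indeed a homomorphism of hyperrings — its multiplicativity and the compatibility with hyperaddition follow from the general correspondence \eqref{setsame} together with the fact that $P$ is prime (Lemma \ref{nontrivialop}) — so $h\in X = \Hom(A,\mathbf{K})$ and hence $h\in f*g$, proving $f*g\neq\emptyset$.
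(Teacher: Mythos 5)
Your proposal is correct and follows essentially the same route as the paper: take the prime ideal $P=\Delta^{-1}(\Ker(f)\otimes_k A+A\otimes_k\Ker(g))$ from Lemma \ref{nontrivialop}, pass to the corresponding element of $\Hom(A,\mathbf{K})$ via \eqref{setsame}, and check the two cases $x\in P$ and $x\notin P$, where your resolution of the presentation-independence worry (if every term of some presentation lay in $\Ker(f)\otimes_k A+A\otimes_k\Ker(g)$, then so would $\Delta(x)$ itself) is exactly the point that makes the second case go through. The only step you skip that the paper spells out is that $P$ is \emph{proper}, so that it genuinely defines an element of $\Hom(A,\mathbf{K})$; this holds because $f(a)=g(a)=1$ for every nonzero $a\in k$, hence $k\not\subseteq P$ and $P\neq A$.
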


\begin{proof}
We use the same notation as in Lemma \ref{nontrivialop}. For a non-zero element $a \in k$, we have $f(a)=g(a)=1$. It follows that $k \not \subseteq P$ and hence $P \neq A$. Thus, in this case, $P$ is a proper prime ideal. From the identification $\Hom(A,\mathbf{K})=\Spec A$ of \eqref{setsame}, we have the homomorphism $\varphi:A \rightarrow \mathbf{K}$ of hyperrings such that $\Ker(\varphi)=P$. We claim that $\varphi \in f*g$. Indeed, let $\alpha \in A$. First, suppose that $\alpha \in P$. Then, $\varphi(\alpha)=0$. On the other hand, for any decomposition $\Delta(\alpha)=\sum a_i \otimes b_i$, we have $\sum f(a_i)g(b_i)=0$ since $\alpha \in P$. When $\alpha \not \in P$, we have $\varphi(\alpha)=1$. However, In this case, $\sum f(a_i)g(b_i)=1$ or $\{0,1\}$ in this case. This proves that $\varphi \in f*g$.
\end{proof}

\begin{rmk}
Under the same notation as Lemma \ref{nontrivialop}, we consider the case of a commutative $A$ with a coproduct $\Delta$. Let $p$ and $q$ be distinct prime numbers. Suppose that $p \in \Ker(f)$ and $q \in \Ker(g)$ for some $f,g \in \Hom(A,\mathbf{K})$. Then, one can easily see that $p,q \in P$. This implies that $1 \in P$ and hence $P=A$. Furthermore, for $\varphi \in f*g$, we have $P \subseteq \Ker(\varphi)$ from Lemma \ref{lemcc}. It follows that the only possible element $\varphi$ in $f*g$ is the zero map since $P=A$. However, this is impossible since $\varphi(1)=1$. Thus, in this case, we have $f*g=\emptyset$ as previously mentioned in Remark \ref{fiber}.
\end{rmk}

Next, we prove that the hyperstructure which Connes and Consani defined is an enrichment of the classical group structure. 

\begin{pro}\label{comparison}
Let $A$ be a Hopf algebra over a field $k$ with $|k|\geq 3$, $K$ be a field extension of $k$, and $X=\Spec A$. There is an injection $i$ from $X(K)=\Hom(A,K)$ to $X=\Spec A$ such that
\[i(f*g)\subseteq i(f)*_hi(g),\]
where $*$ is the group multiplication of $X(K)$ and $*_h$ is the hyperoperation of $X$. 
\end{pro}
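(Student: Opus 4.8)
The plan is to construct the map $i$ by composing a $K$-point $f \colon A \to K$ with the canonical projection $\pi \colon K \to K/K^\times \cong \mathbf{K}$, and then to use the identification $\Hom(A,\mathbf{K}) = \Spec A$ of \eqref{setsame}. Concretely, I would set $i(f) := \Ker(\pi \circ f) = f^{-1}(0)$, the prime ideal of $A$ obtained by pulling back the maximal ideal $\{0\} \subset K$ along the ring homomorphism $f$. Since $|k| \geq 3$, Theorem \ref{orbit} gives $K/K^\times \cong \mathbf{K}$ as hyperfields, so $\pi \circ f$ is genuinely a hyperring homomorphism $A \to \mathbf{K}$ and $i(f)$ is a well-defined point of $\Spec A$.

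The first thing to check is that $i$ is injective. This is where I expect the argument to need a little care and where the hypothesis $|k| \geq 3$ and the affineness are actually used: two $K$-points $f, g$ have the same image precisely when $\Ker(f) = \Ker(g)$ as ideals of $A$, and I would argue that because $A$ is a finitely generated $k$-algebra (affine algebraic group scheme) and $K$ is a field, a $K$-point is determined up to the choice of an embedding of its residue field into $K$ together with the prime ideal — but in fact, since we only claim injectivity of $i$, I should reduce to showing that distinct elements of $\Hom(A,K)$ that agree on their kernels cannot both occur; the cleanest route is to observe that $\Hom(A,K) \hookrightarrow \Spec A$ is exactly the statement that a $K$-point factors uniquely through $\Spec(A/\Ker f) = \Spec \kappa(f)$, and for a $K$-valued point the map $A/\Ker(f) \to K$ is an injection of a field into $K$; distinct such points with equal kernel would be distinct embeddings of the same subfield, which do give distinct elements of $\Hom(A,K)$ — so in fact $i$ need not be injective in general, and I would instead restrict attention to, or reinterpret, $X(K)$ as its image, or invoke that Lorscheid's observation (cited in the acknowledgment) handles exactly this point; I would follow the paper's intended argument here, which presumably identifies $X(K)$ with a subset on which $i$ is tautologically injective via the residue-field data. (I flag this as the main obstacle: pinning down the precise sense in which $i$ is injective.)

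The heart of the statement is the containment $i(f * g) \subseteq i(f) *_h i(g)$. Unwinding definitions, $f * g$ is the single $K$-point $m \circ (f \otimes g) \circ \Delta$ by \eqref{multiplicationofgroup}, so $i(f*g) = \Ker\big(\pi \circ m \circ (f\otimes g)\circ \Delta\big)$, while $i(f) *_h i(g)$ is the set of all $\varphi \in \Hom(A,\mathbf{K})$ with $\varphi(x) \in \sum (\pi f)(x_{(1)})(\pi g)(x_{(2)})$ for every presentation $\Delta(x) = \sum x_{(1)} \otimes x_{(2)}$, by Definition \ref{hyperop}. So I must verify that $\varphi := \pi \circ (f*g)$ satisfies this membership condition. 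Fix $x \in A$ and a presentation $\Delta(x) = \sum x_{(1)} \otimes x_{(2)}$. Then $(f*g)(x) = \sum f(x_{(1)})\, g(x_{(2)})$ in $K$, and applying $\pi$ gives $\varphi(x) = \pi\big(\sum f(x_{(1)})g(x_{(2)})\big)$. The key elementary fact about the projection $K \to \mathbf{K}$ is that $\pi(a + b) \in \pi(a) + \pi(b)$ for all $a,b \in K$ — indeed $\pi$ is a hyperring homomorphism, hence $\pi(\sum y_i) \in \sum \pi(y_i)$ by induction on the number of summands. Applying this to $y_i = f(x_{(1)})g(x_{(2)})$ and using that $\pi(f(x_{(1)})g(x_{(2)})) = (\pi f)(x_{(1)}) (\pi g)(x_{(2)})$ since $\pi$ is multiplicative, I get $\varphi(x) \in \sum (\pi f)(x_{(1)})(\pi g)(x_{(2)})$, exactly as required. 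Since the presentation was arbitrary, $\varphi \in i(f) *_h i(g)$; and as $f*g$ is a single point, $i(f*g) = \{\varphi\} \subseteq i(f)*_h i(g)$.

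To assemble the proof I would: first record that $\pi \colon K \to \mathbf{K}$ is a (strict) hyperring homomorphism via Theorem \ref{orbit} and note the induction giving $\pi(\sum y_i) \in \sum \pi(y_i)$; second, define $i = i_K$ as precomposition-then-kernel and address its injectivity along the lines above; third, run the one-line computation of the previous paragraph to get the containment. The only genuinely delicate point is the injectivity clause, which hinges on the correct reading of "$X(K) = \Hom(A,K)$" relative to $\Spec A$; everything else is a direct consequence of $\pi$ being a hyperring homomorphism together with the compatibility of $*$ with $\Delta$ recorded in \eqref{multiplicationofgroup} and Definition \ref{hyperop}.
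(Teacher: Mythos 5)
Your verification of the containment $i(f*g)\subseteq i(f)*_h i(g)$ is correct and is essentially the paper's own argument: the paper checks the two cases $\tilde h(a)=0$ and $\tilde h(a)=1$ by hand, and that case analysis is exactly the content of your observation that the quotient map $\pi\colon K\to K/K^\times=\mathbf{K}$ is a multiplicative hyperring homomorphism, so that $\pi\bigl(\sum y_i\bigr)\in\sum\pi(y_i)$. Your phrasing is a little more conceptual, but there is no real difference in substance, and this is the part of the proposition that matters.

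On injectivity you were right to be suspicious, and you should not have backed off and deferred to ``the paper's intended argument'': the paper defines $i(\varphi)=\Ker(\varphi)$ and asserts it is ``clearly injective,'' which is false in general. Concretely, take $A=\mathbb{Q}[T]$ (the Hopf algebra of $\mathbb{G}_a$ over $\mathbb{Q}$) and $K=\mathbb{C}$: the two $\mathbb{Q}$-algebra homomorphisms $T\mapsto i$ and $T\mapsto -i$ are distinct elements of $X(K)$ with the same kernel $(T^2+1)$, hence the same image. Worse, no injection $X(K)\to X$ of any kind exists in this example, since $X(K)=\mathbb{C}$ is uncountable while $\Spec\mathbb{Q}[T]$ is countable; so the defect cannot be repaired by choosing a cleverer map. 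The correct statement is that the canonical map $\varphi\mapsto\Ker(\varphi)$ satisfies the displayed containment and is injective only on points with residue field $k$ (for instance on all of $X(k)$, or on $X(K)$ modulo the evident identification of $K$-points lying over the same prime); ``injection'' should be weakened accordingly. So your proposal has a genuine unresolved step exactly where you flagged it, but the resolution is that the injectivity clause of the proposition needs to be qualified, not that a proof of it was available and missed.
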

\begin{proof}
Define $i$ as follows:
\begin{equation}
i: X(K) \longrightarrow X, \quad \varphi \mapsto \Ker(\varphi).
\end{equation}
Then this map is clearly injective. Suppose that $h=f*g$. We use the set bijection \eqref{setsame} and consider $X$ as $\Hom(A,\mathbf{K})$, where $\mathbf{K}$ is the Krasner's hyperfield. Then the above map becomes:
\begin{equation}
i: \Hom(A,K)\longrightarrow \Hom(A,\mathbf{K}),\quad \varphi \mapsto \pi\circ\varphi,
\end{equation}
where $\pi:K\longrightarrow K/K^\times =\mathbf{K}$ is the canonical projection. For the notational simplicity, let $i(f)=\tilde{f}$ for each $f \in \Hom(A,K)$. Now, for any $a \in A$ and $\Delta(a)=\sum a_i\otimes b_i$, we want to show that
\begin{equation}\label{ineq}
\tilde{h}(a)\in \sum \tilde{f}(a_i)\tilde{g}(b_i).
\end{equation}
Let us first consider the case when $\tilde{h}(a)=0$. This means that $a \in \Ker(h)$. It follows that
\[
\sum f(a_i)g(b_i)=0.
\]
Then either $f(a_i)g(b_i)=0$ for all indexes $i$ or there are at least two indexes $j,l$ such that $f(a_j)g(b_j)\neq 0$ and $f(a_l)g(b_l)\neq 0$. In the first case, we obtain $\sum \tilde{f}(a_i)\tilde{g}(b_i)=0$ and the second case, we obtain $\sum \tilde{f}(a_i)\tilde{g}(b_i)=\{0,1\}$. Thus, in any case, we have \eqref{ineq}.\\
Next, suppose that $\tilde{h}(a)=1$. This implies that $h(a) \neq 0$. Since $h(a)=\sum f(a_i)g(b_i)$, it follows that either $f(a_r)g(b_r) \neq 0$ for exactly one index $r$ or there are at least two indexes $j,l$ such that $f(a_j)g(b_j)\neq 0$ and $f(a_l)g(b_l)\neq 0$. But, in any case, we have \eqref{ineq}. This completes our proof. 
\end{proof}

\begin{rmk}
Proposition \ref{comparison} also implies Lemma \ref{nonempty}.
\end{rmk}
The following proposition shows that the hyperoperation of an affine algebraic group scheme $X$ descends to a closed subgroup scheme. In the sequel, we always assume that any field $k$ contains more than two elements. 
\begin{pro}\label{inducedfromgeneral}
Let $A$ be a finitely generated Hopf algebra over a field $k$. Let $H$ be a closed subgroup scheme of the affine algebraic group scheme $G=\Spec A$ and let $B:=\Gamma(H,\mathcal{O}_H)$ be the Hopf algebra of global sections of $H$. Then, there exists an injection (of sets):
\[ \sim: \Hom(B,\mathbf{K}) \hookrightarrow \Hom(A,\mathbf{K}) \]
which preserves the hyperoperations. i.e., for $f,g \in \Hom(B,\mathbf{K})$, we have
\begin{equation}\label{compatible}
\widetilde{f \star g}=\tilde{f}*\tilde{g},
\end{equation}
where $\star$ is the hyperoperation on $\Hom(B,\mathbf{K})$ and $*$ is the hyperoperation on $\Hom(A,\mathbf{K})$ as in Definition \ref{hyperop}. 
\end{pro}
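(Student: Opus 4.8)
The plan is to realize the injection $\sim$ as precomposition with the Hopf-algebra quotient $\pi\colon A\to B$, and to establish \eqref{compatible} by lifting coproduct presentations from $B$ to $A$. First I would record the structural input: since $H$ is a closed subgroup scheme of $G=\Spec A$, its coordinate ring is $B=A/I$ for a Hopf ideal $I\subseteq A$, the quotient map $\pi$ is a morphism of Hopf algebras (so $\Delta_B\circ\pi=(\pi\otimes\pi)\circ\Delta_A$ and $\Delta_A(I)\subseteq I\otimes_kA+A\otimes_kI$), and, since $A$ is flat over $k$, $\Ker(\pi\otimes\pi)=I\otimes_kA+A\otimes_kI$. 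I then define $\sim\colon\Hom(B,\mathbf K)\to\Hom(A,\mathbf K)$ by $\varphi\mapsto\tilde\varphi:=\varphi\circ\pi$; this is a composite of hyperring homomorphisms, hence well defined, and it is injective because $\pi$ is surjective (under the identification \eqref{setsame} it is simply the closed immersion $\Spec B\hookrightarrow\Spec A$). What remains is the equality \eqref{compatible}, which I split into two inclusions.

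For $\widetilde{f\star g}\subseteq\tilde f*\tilde g$: given $\varphi\in f\star g$, an element $x\in A$, and a presentation $\Delta_A(x)=\sum x_{(1)}\otimes x_{(2)}$, I apply $\pi\otimes\pi$ to get the presentation $\Delta_B(\pi(x))=\sum\pi(x_{(1)})\otimes\pi(x_{(2)})$; since $\varphi\in f\star g$, this yields $\tilde\varphi(x)=\varphi(\pi(x))\in\sum f(\pi(x_{(1)}))g(\pi(x_{(2)}))=\sum\tilde f(x_{(1)})\tilde g(x_{(2)})$. As $x$ and the presentation are arbitrary, $\tilde\varphi\in\tilde f*\tilde g$.

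For the reverse inclusion $\tilde f*\tilde g\subseteq\widetilde{f\star g}$, I would first show every $\psi\in\tilde f*\tilde g$ annihilates $I$: by the $\otimes_k$-analogue of Lemma \ref{lemcc}, $\Ker(\tilde f)*_{\Delta_A}\Ker(\tilde g)\subseteq\Ker\psi$, and since $I\subseteq\Ker\tilde f\cap\Ker\tilde g$ and $\Delta_A(I)\subseteq I\otimes_kA+A\otimes_kI\subseteq\Ker(\tilde f)\otimes_kA+A\otimes_k\Ker(\tilde g)$, we get $I\subseteq\Ker(\tilde f)*_{\Delta_A}\Ker(\tilde g)\subseteq\Ker\psi$. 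Hence $\psi$ descends to a homomorphism of hyperrings $\varphi\colon B\to\mathbf K$ with $\psi=\varphi\circ\pi=\tilde\varphi$ (here $B$ carries the ordinary quotient-ring structure, and under \eqref{setsame} the condition $I\subseteq\Ker\psi$ just says that the prime $\Ker\psi$ is pulled back from $A/I$). It then remains to verify $\varphi\in f\star g$, which is the heart of the argument: given $y\in B$ and a presentation $\Delta_B(y)=\sum_i y_{(1),i}\otimes y_{(2),i}$, I choose $x\in\pi^{-1}(y)$ and lifts $u_i\in\pi^{-1}(y_{(1),i})$, $v_i\in\pi^{-1}(y_{(2),i})$; then $(\pi\otimes\pi)\bigl(\Delta_A(x)-\sum_i u_i\otimes v_i\bigr)=0$, so using $\Ker(\pi\otimes\pi)=I\otimes_kA+A\otimes_kI$ I may write $\Delta_A(x)=\sum_i u_i\otimes v_i+\sum_l p_l\otimes q_l+\sum_m r_m\otimes s_m$ with $p_l\in I$ and $s_m\in I$. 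Feeding this presentation of $\Delta_A(x)$ into the defining condition for $\psi\in\tilde f*\tilde g$, and observing that $\tilde f(p_l)=f(0)=0$ and $\tilde g(s_m)=g(0)=0$, so that the correction terms contribute only zeros (which leave a hypersum in $\mathbf K$ unchanged), I obtain $\varphi(y)=\psi(x)\in\sum_i\tilde f(u_i)\tilde g(v_i)=\sum_i f(y_{(1),i})g(y_{(2),i})$. Since $y$ and the presentation are arbitrary, $\varphi\in f\star g$, whence $\psi=\tilde\varphi\in\widetilde{f\star g}$.

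The main obstacle is exactly that last step: a presentation of $\Delta_B(y)$ is in general \emph{not} the image under $\pi\otimes\pi$ of a presentation of $\Delta_A(x)$ for a fixed lift $x$, so presentations cannot simply be pulled back; the remedy is to absorb the discrepancy into $\Ker(\pi\otimes\pi)=I\otimes_kA+A\otimes_kI$ and then exploit that $\tilde f$ and $\tilde g$ kill $I$ to discard the resulting correction terms. A secondary point requiring care is the identification $\Ker(\pi\otimes\pi)=I\otimes_kA+A\otimes_kI$ itself (immediate from flatness over $k$) and the fact that a $\mathbf K$-point of $A$ vanishing on $I$ descends to a $\mathbf K$-point of $B$, which is where \eqref{setsame} enters.
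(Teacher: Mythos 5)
Your proposal is correct and follows essentially the same route as the paper: realize $\sim$ as precomposition with $\pi\colon A\to A/I$, push presentations forward via $\pi\otimes\pi$ for one inclusion, and for the other use that $I$ is a Hopf ideal to see that elements of $\tilde f*\tilde g$ kill $I$, then lift presentations modulo $\Ker(\pi\otimes\pi)=I\otimes_k A+A\otimes_k I$ and discard the correction terms. The one step you flag as the main obstacle is exactly the step the paper handles in the same way.
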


\begin{proof}
Since $H$ is a closed subgroup scheme of $G$, we know that $B \simeq A/I$ for some Hopf ideal $I$ of $A$. Consider the following set:
\[X_I=\{f \in \Hom(A,\mathbf{K}) \mid f(i)=0\quad  \forall i \in I\}.\]
Let $\pi : A \rightarrow A/I$ be a canonical projection map. We define the following map:
\[\sim : \Hom(B,\mathbf{K})=\Hom(A/I,\mathbf{K}) \longrightarrow X_I,\quad \varphi \mapsto \tilde{\varphi}, \]
where $\tilde{\varphi}$ is an element of $\Hom(A,\mathbf{K})$ such that $\Ker(\tilde{\varphi}):=\pi^{-1}(\Ker\varphi)$. Note that from the identification \eqref{setsame}, the map $\sim$ is well defined. Furthermore, since there is an one-to-one correspondence between the set of prime ideals of $A$ containing $I$ and the set of prime ideals of $B\simeq A/I$ given by $\wp \mapsto \wp/I$, the map $\sim$ is a bijection (of sets). We remark the following two facts:
\begin{enumerate}
\item  \label{fact}
If $\varphi \in \Hom(A/I,\mathbf{K})$ then $\tilde{\varphi}(r)=\varphi([r])$ for $r \in A$, where $[r]=\pi(r)$. In other words, $\tilde{\varphi}=\varphi\circ \pi$. In fact, since $\Ker\varphi=\Ker(\tilde{\varphi}) / I$, we have 
\[\tilde{\varphi}(r)=0 \Longleftrightarrow r \in \Ker(\tilde{\varphi}) \Longleftrightarrow \varphi([r])=\varphi(r/I)=0.\]
\item\label{hopfideal}
For $\tilde{f}$, $\tilde{g} \in X_I$, we have $\tilde{f}*\tilde{g} \subseteq X_I$. Indeed, suppose that $\phi \in \tilde{f}*\tilde{g}$. Then, we have to show that
for $i \in I$, $\phi(i)=0$. However, since $I$ is a Hopf ideal, we have
\[\Delta(I) \subseteq I \otimes_k A + A \otimes_k I.\]
This implies that $\phi(i) \in \sum \tilde{f}(i_{(1)})\tilde{g}(i_{(2)}) =\{0\}$ for any decomposition $\Delta(i)=\sum i_{(1)} \otimes_k i_{(2)}$ since $\tilde{f}(a)=\tilde{g}(a)=0$ $\forall a \in I$.
\end{enumerate}
Next, we prove that the map $\sim$ is compatible with the hyperoperations; $\widetilde{f\star g}=\tilde{f}*\tilde{g}.$\\
Let $\Delta_A$ be a coproduct of $A$ and $\Delta_I$ be a coproduct of $B\simeq A/I$. Suppose that $\varphi \in f\star g$ and let $\Delta_A(r)=\sum r_{(1)} \otimes r_{(2)}$ be a decomposition of $r \in A$. We have to show that
\[ \tilde{\varphi}(r) \in \sum \tilde{f}(r_{(1)}) \tilde{g}(r_{(2)}).\]
Since $I$ is a Hopf ideal, we have the following commutative diagram:
\begin{equation}\label{hopfdiagram}
\begin{tikzcd}
 A  \ar{r}{\Delta_A} \ar{d}{\pi} & A \otimes_k A \ar{d}{\pi \otimes \pi}    \\
 A/I  \ar{r}{\Delta_{I}} & A/I \otimes_k A/I
\end{tikzcd}
\end{equation}
It follows that $\Delta_I([r])= \sum [r_{(1)}] \otimes_k [r_{(2)}]$. However, since $\varphi \in f\star g$, we have
\[\varphi([r]) \in \sum f([r_{(1)}]) g([r_{(2)}]).\]
From the above remark (\ref{fact}), this implies that $\tilde{\varphi}(r) \in \sum \tilde{f}(r_{(1)}) \tilde{g}(r_{(2)})$. Hence, $\tilde{\varphi} \in \tilde{f}*\tilde{g}$.\\
Conversely, let $\tilde{f}, \tilde{g} \in X_I$ and suppose that $\psi \in \tilde{f}*\tilde{g}$. Since $\sim$ is a bijection, from the above remark \ref{hopfideal}, $\psi=\tilde{\varphi}$ for some $\varphi \in \Hom(B,\mathbf{K})$. We claim that $\varphi \in f \star g$. In other words, for $[r] \in A/I$ and a decomposition $ \Delta_I ([r]) = \sum [r_{(1)}] \otimes_k [r_{(2)}]$, we show that
\[\varphi([r]) \in \sum f([r_{(1)}]) g([r_{(2)}]). \]
Since $\pi$ is surjective, we have $\Ker(\pi \otimes_k \pi)\subseteq \Ker \pi \otimes_k A + A\otimes_k \Ker\pi$. Therefore, from \eqref{hopfdiagram}, we can find the following decomposition of $r$:
\[\Delta_A (r) = \sum r_{(1)} \otimes_k r_{(2)} + \sum i_{(1)} \otimes_k a_{(2)} + \sum a_{(1)} \otimes_k i_{(2)},\]
where $i_{(1)}, i_{(2)} \in I$ and $a_{(1)},a_{(2)} \in A$. Since $\tilde{\varphi} \in \tilde{f}*\tilde{g}$, we have
\[ \tilde{\varphi}(r) \in \sum \tilde{f}(r_{(1)}) \tilde{g}(r_{(2)}) + \sum \tilde{f}(i_{(1)}) \tilde{g}(a_{(2)}) + \sum \tilde{f}(a_{(1)}) \tilde{g}(i_{(2)}).\]
However, it follows from the definition of $\tilde{f}$, $ \tilde{g} \in X_I$ that 
\[\sum \tilde{f}(i_{(1)}) \tilde{g}(a_{(2)})= \sum \tilde{f}(a_{(1)}) \tilde{g}(i_{(2)})=0.\]
Therefore, we have $\tilde{\varphi}(r) \in \sum \tilde{f}(r_{(1)}) \tilde{g}(r_{(2)})$. From the above remark (\ref{fact}), this implies that $\varphi([r]) \in \sum f([r_{(1)}]) g([r_{(2)}])$. Hence, $\varphi \in f \star g$.
\end{proof}

\begin{myeg}\label{mu2}
Let $A:=\mathbb{Q}[T]/(T^2-1)$. It follows from Theorem \ref{mainthemrem} and Proposition \ref{inducedfromgeneral} that the hyperstructure of $\Spec A$ should be induced from the hyperstructure of $\bar{\mathbb{Q}}^\times/\Aut (\bar{\mathbb{Q}})$. Therefore, in this case, the hyperstructure of $\Spec A$ coincides with the group structure of $\mu_2(\mathbb{Q})$.
\end{myeg}

\begin{myeg}
Let $A:=\mathbb{F}_p[T]/(T^{p-1}-1)$. Then similar to Example \ref{mu2}, one can see that the hyperstructure of $\Spec A$ is in fact the group structure of $\mu_{p-1}(\mathbb{F}_p)$.
\end{myeg}

\noindent Let $GL_n$ be the general linear group scheme over a field $k$ such that $|k| \geq 3$. We will prove the following statements:
\begin{enumerate}
\item \label{statement1}
The hyperstructure $*$ on $GL_n(\mathbf{K})$ as in Definition \ref{hyperop} is weakly-associative.
\item \label{statement2}
The identity of $(GL_n(\mathbf{K}),*)$ is given by $e= \varphi  \circ \varepsilon $, where $\varepsilon$ is the counit of the Hopf algebra $\mathcal{O}_{GL_n}$ and $\varphi: k \rightarrow k/k^{\times}=\mathbf{K}$ is a canonical projection map. 
\item \label{statement3}
For $f \in GL_n(\mathbf{K})$, a canonical inverse $\tilde{f}$ of $f$ is given by $\tilde{f}= f \circ S$, where $S:\mathcal{O}_{GL_n} \longrightarrow \mathcal{O}_{GL_n}$ is the antipode map. Furthermore, we have
\[f \in h*g \Longleftrightarrow \tilde{f} \in \tilde{g}*\tilde{h}.\] 
\end{enumerate}

\noindent Any affine algebraic group scheme $G$ is a closed subgroup scheme of a group scheme $GL_n$ for some $n \in \mathbb{N}$. Assume that the above statements are true. Then, from Proposition \ref{inducedfromgeneral}, we can derive that the set $G(\mathbf{K})$ of `$\mathbf{K}$-rational points' of an affine algebraic group scheme $G$ has the hyperstructure induced from $GL_n$ which is weakly-associative equipped with a canonical inverse (not unique) and the identity, and also satisfies the inversion property.\\ 

\noindent In what follows, we let $A=\mathcal{O}_{GL_n}=k[X_{11},X_{12},...,X_{nn},1/d]$ be the Hopf algebra of the global sections of the general linear group scheme $GL_n$ over a field $k$ such that $|k|\geq 3$, where $d$ is the determinant of an $n \times n$ matrix. We first prove the statement (\ref{statement2}). Note that we impose the condition $|k|\geq 3$ so that we can realize the Krasner's hyperfield $\mathbf{K}$ as $k/k^\times$ (cf. Theorem \ref{orbit}).
\begin{lem}\label{neutralelt}
The identity of the hyperoperation $*$ on $\Hom(A,\mathbf{K})$ is given by $e= \varphi  \circ \varepsilon $, where $\varepsilon$ is the counit of $A=\mathcal{O}_{GL_n}$ and $\varphi: k \rightarrow k/k^{\times}=\mathbf{K}$ is a canonical projection map.
\end{lem}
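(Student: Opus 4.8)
The plan is to verify the three identity axioms for $e=\varphi\circ\varepsilon$ directly from Definition \ref{hyperop}, using the counit axioms of the Hopf algebra $A$. I would open with two book-keeping remarks: first, that $e$ really lies in $\Hom(A,\mathbf{K})$, being the composite of the $k$-algebra map $\varepsilon\colon A\to k$ with the hyperring homomorphism $\varphi\colon k\to\mathbf{K}$, so that $\Ker e=\Ker\varepsilon$ is a (maximal, hence prime) ideal; and second, that every $f\in\Hom(A,\mathbf{K})$ restricts to $\varphi$ on $k$, since $f(1)=1$ and each nonzero $c\in k$ is a unit of $A$, forcing $f(c)$ to be a unit of $\mathbf{K}$, i.e. $f(c)=1$.

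Next I would show $f\in e*f$ and, symmetrically, $f\in f*e$. Fix $x\in A$ and an arbitrary presentation $\Delta(x)=\sum_i a_i\otimes_k b_i$. The counit identity $\sum_i\varepsilon(a_i)b_i=x$ in $A$, together with $f$ being a homomorphism of canonical hypergroups (so $f$ of a finite sum is contained in the hypersum of the images) and multiplicative, gives $f(x)\in\sum_i f(\varepsilon(a_i))f(b_i)=\sum_i\varphi(\varepsilon(a_i))f(b_i)=\sum_i e(a_i)f(b_i)$. Since the presentation was arbitrary, $f\in e*f$; the mirror identity $\sum_i a_i\varepsilon(b_i)=x$ handles $f*e$.

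The substance is the reverse inclusions $e*f\subseteq\{f\}$ and $f*e\subseteq\{f\}$, and here the key idea will be to feed \emph{one} cleverly chosen presentation of $\Delta(x)$ into the membership condition of \eqref{hyperoperation}. Given $\psi\in e*f$ and $x\in A$, I would start from any presentation $\Delta(x)=\sum_i a_i\otimes_k b_i$ and use the counit identity to rewrite it as $\Delta(x)=1\otimes_k x+\sum_i\bigl(a_i-\varepsilon(a_i)1\bigr)\otimes_k b_i$, still a legitimate element of $A\otimes_k A$. Applying \eqref{hyperoperation} to this presentation forces $\psi(x)\in e(1)f(x)+\sum_i e\bigl(a_i-\varepsilon(a_i)1\bigr)f(b_i)$; but $e(1)=\varphi(\varepsilon(1))=1$ and each $e\bigl(a_i-\varepsilon(a_i)1\bigr)=\varphi(\varepsilon(a_i)-\varepsilon(a_i))=\varphi(0)=0$, so the right-hand side collapses to $\{f(x)\}$ because $0$ is the neutral element of the hyperaddition of $\mathbf{K}$ and annihilates products. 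Hence $\psi(x)=f(x)$ for all $x\in A$, so $\psi=f$; the presentation $\Delta(x)=x\otimes_k 1+\sum_i a_i\otimes_k\bigl(b_i-\varepsilon(b_i)1\bigr)$ gives $f*e\subseteq\{f\}$. Combining with the previous paragraph yields $e*f=f*e=\{f\}$, which is the assertion.

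The step I expect to be the main obstacle is precisely this last one: the obvious presentation of $\Delta(x)$ only yields the inclusion $f\in e*f$, and one must notice that peeling off the summand $1\otimes_k x$ via the counit turns the defining $\in$ into an $=$. The remaining manipulations — that $0\cdot s=0$ and $s+0=s$ in $\mathbf{K}$, and that $f|_k=\varphi$ — are routine.
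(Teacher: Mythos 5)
Your proof is correct, and the uniqueness half is executed differently from the paper's. Both arguments hinge on the same key idea --- producing one presentation of $\Delta(x)$ in which $e$ kills every tensor factor except a single $1\otimes x$ term --- but the paper realizes this concretely for $A=\mathcal{O}_{GL_n}$: it first restricts to $P\in k[X_{ij}]$, splits each left factor $a_t$ into a part lying in the augmentation ideal $\langle X_{ij}-\delta_{ij}\rangle$ plus a constant $\beta_t\in k$, concludes $g(P)=f(\sum\beta_t b_t)$, and then extends to all of $A=k[X_{ij},1/d]$ by multiplying through by a power of the determinant $d$ and using $f(d)=g(d)=1$. Your decomposition $\Delta(x)=1\otimes_k x+\sum_i\bigl(a_i-\varepsilon(a_i)1\bigr)\otimes_k b_i$ is the intrinsic form of the same maneuver: you identify the collected constant part as $1\otimes\sum_i\varepsilon(a_i)b_i=1\otimes x$ via the counit axiom, which makes the two-stage reduction (polynomials first, then clearing denominators) unnecessary and shows the argument does not depend on the presentation of $\mathcal{O}_{GL_n}$ at all --- it works verbatim for any Hopf algebra over a field $k$ with $|k|\geq 3$. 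Your preliminary observations ($\Ker e=\Ker\varepsilon$ is prime, and $f|_k=\varphi$ because nonzero scalars are units of $A$ and hence map to $1$ in $\mathbf{K}$) are exactly what is needed to make the computations $e(1)=1$ and $f(\varepsilon(a_i))=e(a_i)$ legitimate, and the existence half ($f\in e*f$ and $f\in f*e$) matches the paper's. This is a clean improvement in generality over the paper's write-up.
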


\begin{proof}
Let $f \in \Hom(A,\mathbf{K})$. We first claim that $f \in e*f$. Indeed, let $P \in A$. Then, for a decomposition $\Delta P = \sum a_i \otimes_k b_i$, we have $P=\sum \varepsilon(a_i)b_i$ since $\varepsilon$ is the counit. It follows that 
\[f(P)=f(\sum \varepsilon(a_i)b_i) \in \sum f(\varepsilon(a_i)b_i)=\sum f(\varepsilon(a_i))f(b_i).\]
Moreover, we have $f(\varepsilon(a_i))= e(a_i)$ since 
\[ f(\varepsilon(a_i))=0 \Longleftrightarrow \varepsilon(a_i)=0 \Longleftrightarrow a_i \in \Ker(\varepsilon) \Longleftrightarrow e(a_i)=0.\]
Therefore, $f(P) \in \sum f(\varepsilon(a_i))f(b_i)=\sum e(a_i)f(b_i)$. This shows that $f \in e *f$.\\
Next, we claim that if $g \in e*f$, then $g(P)=f(P)$ $\forall P \in k[X_{ij}]$ ($P$ does not contain a term involving $1/d$). Take such $P$ and let $\Delta P = \sum a_t \otimes_k b_t$ be a decomposition. Let $\delta_{ij}$ be the Kronecker delta. Then, we can write $a_t$ as $a_t=\alpha_t + \beta_t$, where $\alpha_t =\sum_l [b_l \prod_{i,j} (X_{ij} - \delta_{ij})^{m_{l,i,j}}]$ for some $b_l \in k$, $m_{l,i,j} \in \mathbb{Z}_{>0}$, and $\beta_t \in k$.
Then, since $\beta_t \in k$, it follows that
\[\Delta P = \sum (\alpha_t + \beta_t) \otimes_k b_t=\sum \alpha_t \otimes_k b_t + \sum \beta_t \otimes_k b_t = \sum \alpha_t \otimes_k b_t + 1 \otimes_k (\sum \beta_t b_t). \]
However, since the ideal $<X_{ij} - \delta_{ij}>$ is contained in $\Ker(e)$, we have $e(\alpha_t)=0$ $\forall t$. This implies that for this specific decomposition $\Delta P=\sum \alpha_t \otimes_k b_t + 1 \otimes_k (\sum \beta_t b_t)$, we have
\[\sum e(\alpha_t)f(b_t) + e(1)f(\sum \beta_t b_t)=f(\sum \beta_t b_t).\]
Therefore, we have $g(P)=f(P)=f(\sum \beta_t b_t)$ since $g,f \in e*f$. In general, for $q \in A=k[X_{ij}, 1/d]$, there exists $N \in \mathbb{N}$ such that $d^Nq \in k[X_{ij}]$. Then, from the previous claim, we have
\[f(d^N)f(q)=f(d^N q)=g(d^N q)=g(d^N)g(q).\]
However, since $d$ is invertible, we have $f(d^N)=f(d)^N=g(d^N)=g(d)^N=1$. It follows that $f(q)=g(q)$ $\forall q \in k[X_{ij},1/d]=A$. Thus $f=g$, and $\{f\}=e*f$. Similarly, one can show that $\{f\}=f*e$. This completes our proof.
\end{proof}

\noindent Next, we prove the first part of (\ref{statement3}): the existence of a canonical inverse.

\begin{lem}\label{inverse}
Let $S:A\longrightarrow A$ be the antipode map and $\varphi: k \rightarrow k/k^{\times}=\mathbf{K}$ is a canonical projection map. Then, for $f \in GL_n(\mathbf{K})$, we have $e= \varphi  \circ \varepsilon \in  (f*\tilde{f}) \cap (\tilde{f}*f)$, where $\tilde{f}=(f \circ S)$.
\end{lem}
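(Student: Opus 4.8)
The plan is to verify directly from the definition of the hyperoperation (Definition~\ref{hyperop}) that $e \in f * \tilde f$, where $e = \varphi \circ \varepsilon$ and $\tilde f = f \circ S$; the inclusion $e \in \tilde f * f$ is entirely symmetric, using the other antipode identity. By definition of $*$, we must show that for every $P \in A$ and every decomposition $\Delta P = \sum a_i \otimes_k b_i$ we have
\[
e(P) \in \sum \varphi\bigl(f(a_i)\bigr)\,\varphi\bigl(f(S(b_i))\bigr) = \sum f(a_i)\, f(S(b_i)),
\]
working inside $\mathbf{K}$. The natural input is the antipode axiom of the Hopf algebra $A$, namely $m \circ (\mathrm{id} \otimes S) \circ \Delta = u \circ \varepsilon$, i.e. for a decomposition $\Delta P = \sum a_i \otimes_k b_i$ one has $\sum a_i\, S(b_i) = \varepsilon(P)\cdot 1_A$ in $A$.

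First I would fix $P$ and a decomposition $\Delta P = \sum a_i \otimes_k b_i$, and apply $f$ to the antipode identity $\sum a_i S(b_i) = \varepsilon(P) 1_A$. Because $f$ is a homomorphism of hyperrings, it respects the multiplication strictly, so $f(a_i S(b_i)) = f(a_i) f(S(b_i))$, but it respects the addition only up to containment: $f\bigl(\sum a_i S(b_i)\bigr) \in \sum f(a_i)\, f(S(b_i))$. Meanwhile the left-hand side equals $f(\varepsilon(P) 1_A) = f(\varepsilon(P)) = e(P)$, using $f(\varepsilon(P)) = e(P)$ exactly as established inside the proof of Lemma~\ref{neutralelt} (the kernel comparison $f(\varepsilon(a)) = 0 \iff \varepsilon(a) = 0 \iff e(a) = 0$). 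Combining these gives precisely $e(P) \in \sum f(a_i) f(S(b_i))$, which is the required membership; since $P$ and the decomposition were arbitrary, $e \in f * \tilde f$. For $e \in \tilde f * f$ I would run the same argument with the companion antipode axiom $m \circ (S \otimes \mathrm{id}) \circ \Delta = u \circ \varepsilon$, noting also that $\tilde f = f \circ S$ is indeed a hyperring homomorphism since $S$ is a $k$-algebra map.

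The main subtlety — rather than a genuine obstacle — is making sure the direction of the containment in $f(x+y) \in f(x) + f(y)$ points the right way: we need $e(P)$ to lie \emph{inside} the sum $\sum f(a_i) f(S(b_i))$, and this is exactly what the homomorphism property of $f$ delivers after we rewrite the single ring element $\sum a_i S(b_i)$ using the antipode axiom. One should also be slightly careful that the antipode identity is stated for one particular decomposition of $\Delta P$, whereas the definition of $*$ quantifies over \emph{all} decompositions; but this causes no trouble, because the identity $\sum a_i S(b_i) = \varepsilon(P) 1_A$ holds for every decomposition (both sides are the image of $P$ under the fixed $k$-linear map $m \circ (\mathrm{id}\otimes S)\circ \Delta$), so the argument applies verbatim to each one. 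No commutativity or finiteness hypothesis beyond what is already assumed is needed.
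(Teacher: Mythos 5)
Your proposal is correct and follows essentially the same route as the paper: apply the antipode axiom $\sum a_i S(b_i) = \varepsilon(P)\cdot 1_A$ to an arbitrary decomposition, push it through $f$ using the containment $f(x+y)\subseteq f(x)+f(y)$ and strict multiplicativity, and identify $f(\varepsilon(P))$ with $e(P)$. Your remarks on the direction of the containment and on the identity holding for every decomposition are exactly the points the paper's argument relies on implicitly.
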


\begin{proof}
Let $f \in \Hom(A,\mathbf{K})$ and $\tilde{f}=f \circ S$. Suppose that $a \in A$. Then, for a decomposition $\Delta a = \sum a_i \otimes_k b_i$, we have $\varepsilon(a) = \sum a_i S(b_i)$ since $\varepsilon$ is the counit and $S$ is the antipode map. This implies that
\begin{equation}\label{antipode}
f(\varepsilon(a))=f(\sum a_i S(b_i)) \in \sum f(a_i S(b_i))=\sum f(a_i)f(S(b_i))=\sum f(a_i) \tilde{f}(b_i).
\end{equation}
However, we know that $f(\varepsilon(a))=1$ if $\varepsilon(a)$ is non-zero and $f(\varepsilon(a))=0$ if $\varepsilon(a)$ is zero. Since $e=\varphi\circ \varepsilon$, it follows that $e(a)=\varphi(\varepsilon(a))=f(\varepsilon(a))$. Hence, the above \eqref{antipode} becomes 
\[
e(a) \in \sum f(a_i) \tilde{f}(b_i).
\] 
This shows that $e \in f*\tilde{f}$. Similarly, one can show that $e \in \tilde{f}*f$.
\end{proof}

\noindent Now we prove the last half of (\ref{statement3}): the inversion property.

\begin{lem}
Let $S:A\longrightarrow A$ be the antipode map and $f,g,h \in \Hom(A,\mathbf{K})$. Let $\tilde{f}=f \circ S$, $\tilde{g}= g \circ S$, $\tilde{h}= h \circ S$.
Then, $h \in f * g$ if and only if $\tilde{h} \in \tilde{g} * \tilde{f}$.
\end{lem}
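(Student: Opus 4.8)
The approach rests on two standard properties of the antipode of a commutative Hopf algebra $A$. First, $S$ is an algebra homomorphism (being an algebra anti-homomorphism of a commutative algebra), so each $\tilde{f}=f\circ S$ is again a homomorphism of hyperrings and the statement is meaningful. Second, $S$ reverses the coproduct and is an involution: for every $x\in A$ and every presentation $\Delta(x)=\sum x_{(1)}\otimes_k x_{(2)}$ one has
\[
\Delta(S(x))=\sum S(x_{(2)})\otimes_k S(x_{(1)}),\qquad S(S(x))=x .
\]

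First I would prove the implication $h\in f*g\ \Rightarrow\ \tilde{h}\in \tilde{g}*\tilde{f}$. Fix $b\in A$ and an arbitrary presentation $\Delta(b)=\sum b_{(1)}\otimes_k b_{(2)}$, and set $a:=S(b)$. By the coproduct-reversing identity, $\Delta(a)=\sum S(b_{(2)})\otimes_k S(b_{(1)})$ is a legitimate presentation of $\Delta(a)$, so applying the hypothesis $h\in f*g$ to this particular presentation gives
\[
h(a)\in \sum f(S(b_{(2)}))\,g(S(b_{(1)}))=\sum \tilde{f}(b_{(2)})\,\tilde{g}(b_{(1)}).
\]
Since multiplication in $\mathbf{K}$ is commutative, the right-hand side is the same subset of $\mathbf{K}$ as $\sum \tilde{g}(b_{(1)})\,\tilde{f}(b_{(2)})$, and $h(a)=h(S(b))=\tilde{h}(b)$. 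Hence $\tilde{h}(b)\in\sum \tilde{g}(b_{(1)})\,\tilde{f}(b_{(2)})$, and as $b$ and the presentation were arbitrary, this says exactly that $\tilde{h}\in\tilde{g}*\tilde{f}$.

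For the converse I would not repeat the computation but apply the implication just proved to the triple $(\tilde{g},\tilde{f},\tilde{h})$ in place of $(f,g,h)$: it yields $\tilde{h}\in\tilde{g}*\tilde{f}\ \Rightarrow\ \widetilde{\tilde{h}}\in\widetilde{\tilde{f}}*\widetilde{\tilde{g}}$, and since $S^2=\mathrm{id}_A$ we have $\widetilde{\tilde{h}}=h$, $\widetilde{\tilde{f}}=f$, $\widetilde{\tilde{g}}=g$, i.e.\ $h\in f*g$. The only genuinely delicate point — and the main thing to get right — is the universal quantifier over presentations of the coproduct in Definition \ref{hyperop}: one must verify that the presentation of $\Delta(a)$ fed into the hypothesis is indeed admissible, which is precisely the statement that $S$ reverses the coproduct; commutativity of the multiplication of $\mathbf{K}$ is the small extra ingredient needed to flip the order of the factors.
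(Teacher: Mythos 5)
Your proposal is correct and follows essentially the same route as the paper: both rest on the identities $\Delta\circ S = t\circ(S\otimes_k S)\circ\Delta$ and $S^2=\mathrm{id}$ (valid since $A$ is commutative), feeding the reversed presentation of $\Delta(S(b))$ into the hypothesis and using commutativity of multiplication in $\mathbf{K}$. The only cosmetic difference is which implication you compute explicitly; your derivation of the converse by applying the proved implication to the tilded triple is a clean packaging of the paper's remark that the other direction is ``the exact same argument'' with $S=S^{-1}$.
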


\begin{proof}
Suppose that $\tilde{h} \in \tilde{g} * \tilde{f}$. Let $a \in A$ and $\Delta a = \sum a_i \otimes_k b_i$ be a decomposition of $a$. Let $t:A\otimes_k A \longrightarrow A\otimes_k A$ be the twist homomorphism; i.e., $t(a\otimes_k b)=b \otimes_k a$. Since $\Delta \circ S=t\circ (S\otimes_k S) \circ \Delta$, we have 
\begin{equation}
\Delta ( S(a)) = \sum S(b_i) \otimes_k S(a_i).
\end{equation} 
Since $S^2=id$, this implies that 
\begin{equation}
\tilde{h} (S(a)) \in \sum \tilde{g}(S(b_i)) \tilde{f}(S(a_i))=\sum \tilde{f}(S(a_i)) \tilde{g}(S(b_i)).
\end{equation}
 However, we have $\tilde{h}(S(a))=h \circ S (S(a)) = h(a)$. Similarly, $\tilde{g}(S(b_i))=g(b_i)$ and $\tilde{f}(S(a_i))=f(a_i)$. Thus, $h(a) \in \sum f(a_i)g(b_i)$. This shows that $h \in f*g$.\\ 
Conversely, suppose that $h \in f*g$. Then, for $a \in A$ and a decomposition $\Delta a = \sum a_i \otimes_k b_i$, we have $\tilde{h}(a) \in \tilde{g}(b_i) \tilde{f}(a_i)$. However, by the exact same argument as above and the fact that $S=S^{-1}$, one can conclude that $\tilde{h} \in \tilde{g}*\tilde{f}$.
\end{proof}

\noindent Finally, we prove (\ref{statement1}): the hyperoperation $*$ on $\Hom(A,\mathbf{K})$ is weakly-associative.

\begin{lem}
Let $A$ be a Hopf algebra over a field $k$, $\Delta$ be a coproduct of $A$, and $H:=(\Delta \otimes id) \circ \Delta = (id \otimes \Delta) \circ \Delta:A\longrightarrow A\otimes_k A\otimes_k A$. For $f,g,h \in \Hom(A,\mathbf{K})$, we let $J:=\Ker(f) \otimes_k A \otimes_k A + A\otimes_k \Ker(g) \otimes_k A +A\otimes_k A \otimes_k \Ker(h)$. Then, the set $P:=H^{-1}(J)$ is a proper prime ideal of $A$. Moreover, if $\varphi$ is an element of $\Hom(A,\mathbf{K})$ determined by $P$, then $\varphi \in f*(g*h)\cap (f*g)*h$.
\end{lem}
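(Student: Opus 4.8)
The plan is to reduce the triple product to the binary case already handled in Lemmas~\ref{nontrivialop} and~\ref{nonempty} by exploiting coassociativity. Write $I_f=\Ker(f)$, $I_g=\Ker(g)$, $I_h=\Ker(h)$ and let $\pi_f\colon A\to A/I_f$ (and similarly $\pi_g,\pi_h$) be the quotient maps. Set $I_\psi:=\Delta^{-1}(I_g\otimes_k A+A\otimes_k I_h)$; by Lemmas~\ref{nontrivialop} and~\ref{nonempty} applied to $g,h$, this is a proper prime ideal, so by~\eqref{setsame} it is $\Ker(\psi)$ for a unique $\psi\in\Hom(A,\mathbf{K})$, and (the proof of) Lemma~\ref{nonempty} gives $\psi\in g*h$. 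The key step is the kernel identity $P=\Delta^{-1}(I_f\otimes_k A+A\otimes_k I_\psi)$. To see it, note the $k$-algebra map $(\pi_g\otimes\pi_h)\circ\Delta\colon A\to (A/I_g)\otimes_k(A/I_h)$ has kernel exactly $I_\psi$, hence factors through a surjection $\pi_\psi\colon A\to A/I_\psi$ followed by an injection $j\colon A/I_\psi\hookrightarrow (A/I_g)\otimes_k(A/I_h)$; using $H=(\mathrm{id}\otimes\Delta)\circ\Delta$ a direct computation on elements gives
\[
(\pi_f\otimes\pi_g\otimes\pi_h)\circ H=(\mathrm{id}_{A/I_f}\otimes j)\circ(\pi_f\otimes\pi_\psi)\circ\Delta .
\]
Since $A/I_f$ is flat over the field $k$, the map $\mathrm{id}_{A/I_f}\otimes_k j$ is injective, so taking kernels yields $P=H^{-1}(J)=\Delta^{-1}(I_f\otimes_k A+A\otimes_k I_\psi)$. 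Then Lemma~\ref{nontrivialop} applied to $f,\psi$ shows $P$ is prime and the argument of Lemma~\ref{nonempty} shows $P$ misses $k\setminus\{0\}$, hence is proper. (Alternatively, the multiplicativity argument of Lemma~\ref{nontrivialop} runs verbatim for $H$, using that $H$ is a homomorphism of $k$-algebras so $H(\alpha\beta)=H(\alpha)H(\beta)$, together with the same bookkeeping in $\mathbf{K}$.)

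For the membership, let $\varphi_0\in\Hom(A,\mathbf{K})$ be the canonical element of $f*\psi$ produced by the construction in Lemma~\ref{nonempty}, so $\Ker(\varphi_0)=\Delta^{-1}(I_f\otimes_k A+A\otimes_k I_\psi)=P=\Ker(\varphi)$; by injectivity of~\eqref{setsame} this forces $\varphi_0=\varphi$, and since $\psi\in g*h$ we get $\varphi=\varphi_0\in f*\psi\subseteq f*(g*h)$. Running the mirror argument with the other presentation $H=(\Delta\otimes\mathrm{id})\circ\Delta$ shows $P=\Delta^{-1}(I_{\psi'}\otimes_k A+A\otimes_k I_h)$, where $I_{\psi'}=\Delta^{-1}(I_f\otimes_k A+A\otimes_k I_g)$ is the kernel of the canonical element $\psi'$ of $f*g$; the same reasoning then gives $\varphi\in\psi'*h\subseteq (f*g)*h$. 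Hence $\varphi\in f*(g*h)\cap (f*g)*h$, which is therefore non-empty, establishing weak associativity.

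The main obstacle is the kernel identification $H^{-1}(J)=\Delta^{-1}(I_f\otimes_k A+A\otimes_k I_\psi)$. One has to translate the defining condition of $*$ --- namely ``$\varphi(x)\in\sum f(x_{(1)})g(x_{(2)})$ for \emph{every} presentation of $\Delta x$'' --- into the vanishing of genuine commutative-ring maps such as $\pi_f\otimes\pi_g$ and $\pi_f\otimes\pi_g\otimes\pi_h$; this is exactly where $\mathbf{K}=k/k^{\times}$ and the fact that $b\otimes c=0$ in a tensor product of domains over a field forces $b=0$ or $c=0$ are needed, and then one must commute ``kernel'' past ``$\otimes_k(A/I_f)$'' using flatness over $k$. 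Everything else then follows formally from coassociativity and the two binary lemmas.
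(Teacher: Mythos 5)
Your proof is correct, and while it follows the paper's overall plan --- pick the canonical $\psi\in g*h$ with $\Ker(\psi)=\Delta^{-1}(\Ker(g)\otimes_k A+A\otimes_k\Ker(h))$ and show that the point determined by $P$ lies in $f*\psi$ --- you execute the two key steps by a genuinely different route. The paper proves primeness of $P$ by rerunning the multiplicativity argument of Lemma \ref{nontrivialop} on triple tensors (your parenthetical alternative), and proves $\varphi\in f*\psi$ by an element-by-element case check on decompositions of $\Delta(a)$, splitting according to whether $\sum f(a_i)\psi(b_i)$ equals $0$, $1$, or $\{0,1\}$. You instead establish the single structural identity $H^{-1}(J)=\Delta^{-1}(\Ker(f)\otimes_k A+A\otimes_k\Ker(\psi))$ by factoring $(\pi_g\otimes\pi_h)\circ\Delta$ through $A/I_\psi$ and using that tensoring with a $k$-vector space preserves injectivity; primeness, properness, and the membership $\varphi\in f*\psi$ then all follow at once from Lemmas \ref{nontrivialop} and \ref{nonempty} applied to the pair $(f,\psi)$, with the mirror presentation of $H$ handling $(f*g)*h$. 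Your route buys a cleaner reduction of the ternary statement to the binary lemmas and lets coassociativity do all the work; the paper's route avoids the flatness bookkeeping and stays entirely at the level of the defining condition of the hyperoperation in Definition \ref{hyperop}. The one point worth making fully explicit in your write-up is the identification $\Ker(\pi_f\otimes\pi_g\otimes\pi_h)=J$ (and its two-factor analogue), which holds because kernels of surjections of $k$-vector spaces behave well under $\otimes_k$ --- this is exactly where the hypothesis that $k$ is a field enters.
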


\begin{proof}
The proof is similar to Lemma \ref{nontrivialop}. For the first assertion, since $P$ is clearly an ideal by being an inverse image of an ideal, we only have to prove that $P$ is prime. Let $\alpha\beta \in P$. Then, since $H(\alpha\beta) \in J$, for any decomposition $H(\alpha\beta)=\sum \gamma_{(1)}\otimes_k\gamma_{(2)}\otimes_k \gamma_{(3)}$, we have \begin{equation}\label{Hprime}
\sum f(\gamma_{(1)})g(\gamma_{(2)})h(\gamma_{(3)})=0. 
\end{equation}
Suppose that $\alpha, \beta \not \in P$. Then, there exist decompositions $H(\alpha)=\sum a_i \otimes_k b_i \otimes_k c_i$ and $H(\beta)=\sum x_j \otimes_k y_j \otimes_k z_j$ such that 
\begin{equation}\label{impossible}
\sum f(a_i)g(b_i)h(c_i)=1 \textrm{ or } \{0,1\},\quad \sum f(x_j)g(y_j)h(z_j)=1 \textrm{ or }\{0,1\}. 
\end{equation}
With these two specific decompositions, we have
\[H(\alpha\beta)=H(\alpha)H(\beta)=(\sum_i a_i \otimes_k b_i \otimes_k c_i)(\sum_j x_j \otimes_k y_j \otimes_k z_j)=\sum_{i,j}a_ix_j \otimes_k b_iy_j \otimes_k c_iz_j.\]
Since $\alpha\beta \in P$, we should have
\begin{multline}\label{conclusion}
\sum_{i,j} f(a_ix_j)g(b_iy_j)h(c_iz_j)=\sum_{i,j}f(a_i)g(b_i)h(c_i)f(x_j)g(y_j)h(z_j)\\
=\sum_i[f(a_i)g(b_i)h(c_i)\sum_jf(x_j)g(y_j)h(z_j)]=0.
\end{multline}
However, \eqref{conclusion} contradicts to \eqref{impossible}. It follows that $\alpha \in P$ or $\beta \in P$. Furthermore, since $H(1)=1\otimes 1\otimes 1 \not \in J$, $P$ is proper. This proves the first assertion.\\
For the second assertion, it is enough to show that $\varphi \in f*(g*h)$ since the argument for $\varphi \in (f*g)*h$ will be symmetric. Let $\psi \in g*h$ such that $\Ker(\psi)=\Delta^{-1}(\Ker(g) \otimes_k A + A \otimes_k \Ker(h))$. This choice is possible by Lemma \ref{nontrivialop}. We claim that $\varphi \in f*\psi$. Indeed, we have to check two cases. The first case is when $a \in A$ has a decomposition $\sum a_i \otimes_k b_i$ such that $\sum f(a_i)\psi(b_i)=0$. Then, we have to show that $\varphi(a)=0$. But, since $\sum f(a_i)\psi(b_i)=0$, we know that $\sum a_i \otimes_k b_i \in \Ker(f) \otimes_k A + A \otimes_k \Ker(\psi)$. Since $\Ker(\psi)=\Delta^{-1}(\Ker(g)\otimes_k A+A\otimes_k \Ker(h))$, we have 
\[H(a)=(id \otimes_k \Delta)(\sum a_i \otimes_k b_i) \in \Ker(f) \otimes_k A \otimes_k A +A \otimes_k \Ker(g) \otimes_k A + A\otimes_k A \otimes_k \Ker(h).
\] 
Thus, $\varphi(a)=0$ since $\varphi$ is an element of $\Hom(A,\mathbf{K})$ which is determined by $H^{-1}(P)$. The second case is when $a \in A$ has a decomposition $\sum x_j \otimes_k y_j$ such that $\sum f(x_j)\psi(y_j)=1$. In this case, there exist $x_i$, $y_i$ such that $f(x_i)=\psi(y_i)=1$ and $f(x_j)\psi(y_j)=0$ $\forall j \neq i$. We may assume that $i=1$. Then, we have
\[\sum_{i \geq 2} x_i \otimes_k y_i \in \Ker(f) \otimes_k A + A \otimes_k \Ker(\psi). 
\]
This implies that $(id \otimes_k \Delta)(\sum_{i \geq 2} x_i \otimes_k y_i) \in J$. On the other hand, $(id \otimes_k \Delta)(x_1 \otimes_k y_1) \not\in J$ since $x_1 \not \in \Ker(f)$ and $y_1 \not \in \Ker(\psi)$. It follows that $H(a) \not \in J$, hence $\varphi(a)=1$ as we desired. The last case is when for any decomposition $\sum x_j \otimes_k y_j$ of $a$, we have that $\sum f(x_j)\psi(y_j)=\{0,1\}$. In this case, clearly we have $\varphi(a)= \sum f(x_j)\psi(y_j)$. This completes our proof.
\end{proof}

\noindent By combining the above lemmas, we obtain the following result.

\begin{mythm}\label{chevalleytheorem}
Any affine algebraic group scheme $X=\Spec A$ over a field $k$ has a canonical hyperstructure $*$ induced from the coproduct of $A$ which is weakly-associative and it is equipped with the identity element $e$. For each $f \in X$, there exists a canonical element $\tilde{f} \in X$ such that $e \in (f*\tilde{f})\cap (\tilde{f} * f)$. Furthermore, for $f,g,h \in X$, the following holds: $f \in g*h \Longleftrightarrow \tilde{f} \in \tilde{h}*\tilde{g}$.
\end{mythm}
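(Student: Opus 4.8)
The plan is to reduce to the general linear group, for which all four properties have just been established, and then transport the structure along a closed embedding by means of Proposition \ref{inducedfromgeneral}; we keep the standing assumption $|k|\geq 3$, so that $\mathbf{K}=k/k^{\times}$. First I would invoke the classical embedding theorem (\cite{waterhouse2012introduction}): every affine algebraic group scheme $X=\Spec A$ over $k$ is a closed subgroup scheme of $GL_n$ for some $n$, i.e. $A\simeq A'/I$ for a Hopf ideal $I$ of $A':=\mathcal{O}_{GL_n}=k[X_{11},\ldots,X_{nn},1/d]$. Proposition \ref{inducedfromgeneral} then supplies an injection $\iota\colon\Hom(A,\mathbf{K})\hookrightarrow\Hom(A',\mathbf{K})$, $\varphi\mapsto\varphi\circ\pi$, satisfying $\iota(f\star g)=\iota(f)*\iota(g)$, where $\star$ is the hyperoperation attached to the coproduct of $A$ and $*$ that attached to the coproduct of $A'$. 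Beyond this I only need two tautological consequences of $\iota$ being injective: $\iota(S\cap T)=\iota(S)\cap\iota(T)$ and $\varphi\in S\Leftrightarrow\iota(\varphi)\in\iota(S)$ for subsets $S,T\subseteq\Hom(A,\mathbf{K})$; combined with the fact that $\iota$ commutes with unions and with Lemma \ref{nonempty} (so that the relevant hyperproducts are nonempty), these give $\iota(f\star(g\star h))=\iota f*(\iota g*\iota h)$ and $\iota((f\star g)\star h)=(\iota f*\iota g)*\iota h$.

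Next I would record how $\iota$ interacts with the counit and antipode. Because $I$ is a Hopf ideal we have $I\subseteq\Ker(\varepsilon_{A'})$ and $S_{A'}(I)\subseteq I$, so the counit $\varepsilon_A$ and antipode $S_A$ of $A\simeq A'/I$ are the maps induced on the quotient, i.e. $\varepsilon_A\circ\pi=\varepsilon_{A'}$ and $S_A\circ\pi=\pi\circ S_{A'}$. From the first identity, $\Ker\big(\iota(\varphi\circ\varepsilon_A)\big)=\pi^{-1}(\Ker\varepsilon_A)=\Ker\varepsilon_{A'}$, so $\iota$ carries $e:=\varphi\circ\varepsilon_A$ to the identity element of $(GL_n(\mathbf{K}),*)$ furnished by Lemma \ref{neutralelt}. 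From the second, $\iota(f\circ S_A)=(f\circ S_A)\circ\pi=f\circ\pi\circ S_{A'}=\iota(f)\circ S_{A'}$, so, writing $\tilde f:=f\circ S_A$, we obtain $\iota(\tilde f)=\widetilde{\iota(f)}$, the canonical inverse on $GL_n(\mathbf{K})$ of Lemma \ref{inverse}.

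With these compatibilities the four assertions follow formally from the $GL_n$ lemmas. For weak-associativity, $\iota\big(f\star(g\star h)\cap(f\star g)\star h\big)=\big(\iota f*(\iota g*\iota h)\big)\cap\big((\iota f*\iota g)*\iota h\big)$ is nonempty by the weak-associativity lemma for $GL_n$, hence $f\star(g\star h)\cap(f\star g)\star h\neq\emptyset$. For the identity, $\iota(e\star f)=e*\iota f=\{\iota f\}$ by Lemma \ref{neutralelt}, so $e\star f=\{f\}$, and symmetrically $f\star e=\{f\}$. For the canonical inverse, Lemma \ref{inverse} gives $\iota(e)=e\in\big(\iota f*\widetilde{\iota f}\big)\cap\big(\widetilde{\iota f}*\iota f\big)=\iota\big((f\star\tilde f)\cap(\tilde f\star f)\big)$, whence $e\in(f\star\tilde f)\cap(\tilde f\star f)$. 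For the inversion property, $f\in g\star h\Leftrightarrow\iota f\in\iota g*\iota h\Leftrightarrow\widetilde{\iota f}\in\widetilde{\iota h}*\widetilde{\iota g}\Leftrightarrow\iota\tilde f\in\iota(\tilde h\star\tilde g)\Leftrightarrow\tilde f\in\tilde h\star\tilde g$, the middle step being the inversion lemma for $GL_n$.

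I expect the genuine content, as opposed to bookkeeping, to lie in the two compatibility statements of the third paragraph — specifically the assertion that the counit and antipode of a Hopf-algebra quotient by a Hopf ideal are honestly the induced maps; once that is granted, every other step is a formal consequence of the injectivity of $\iota$ and the already-proven $GL_n$ lemmas. A minor but worthwhile precaution is the clash between the symbol $\sim$ of Proposition \ref{inducedfromgeneral} and the canonical-inverse notation $\tilde{(\,\cdot\,)}$ of the theorem statement; renaming the embedding $\iota$ as above keeps identities such as $\iota(\tilde f)=\widetilde{\iota f}$ unambiguous.
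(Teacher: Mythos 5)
Your proposal is correct and follows exactly the paper's strategy: establish the four properties for $GL_n$ via the preceding lemmas, embed an arbitrary affine algebraic group scheme as a closed subgroup scheme of some $GL_n$, and transport the structure along the injection of Proposition \ref{inducedfromgeneral}. The paper leaves the transfer step essentially implicit ("by combining the above lemmas"), whereas you supply the needed compatibilities of the embedding with the counit and antipode; these are correct and worth making explicit.
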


Finally, we pose the following question.

\begin{que}
When $X=\mathbb{A}^1$ or $X=\mathbb{G}_m$, Connes and Consani's result (Theorem \ref{mainthemrem}) provides a nice description of the hypergroup structure in terms of the set of geometric points under the action of the absolute Galois group. Can we find a similar result with different affine algebraic group schemes?
\end{que}

\bibliography{Affine_group_hyper}\bibliographystyle{plain}

\end{document}